\numberwithin{equation}{section}
\theoremstyle{plain}
\newtheorem{theorem}[subsection]{Theorem}
\newtheorem{proposition}[subsection]{Proposition}
\newtheorem{lemma}[subsection]{Lemma}
\theoremstyle{definition}
\newtheorem{question}[subsection]{Question}
\providecommand{\F}{\mathop{\mathbb{F}}\nolimits}
\providecommand{\E}{\mathop{\mathbb{E}}\nolimits}
\providecommand{\Ham}{\mathop{\rm Ham}\nolimits}
\newcommand{\wh}{\widehat}
\renewcommand{\P}{\mathop{\mathbb{P}}\nolimits}
\begin{document}

\title{Green's sumset problem at density one half}

\author{Tom Sanders}
\address{Department of Pure Mathematics and Mathematical Statistics\\
University of Cambridge\\
Wilberforce Road\\
Cambridge CB3 0WA\\
England } \email{t.sanders@dpmms.cam.ac.uk}

\begin{abstract}
We investigate the size of subspaces in sumsets and show two main results.  First, if $A \subset \F_2^n$ has density at least $1/2 - o(n^{-1/2})$ then $A+A$ contains a subspace of co-dimension $1$.  Secondly, if $A \subset \F_2^n$ has density at least $1/2-o(1)$ then $A+A$ contains a subspace of co-dimension $o(n)$.\end{abstract}

\maketitle

\section{Introduction}

In the paper \cite{JBAA} Bourgain first addressed the question of showing that if $A \subset \{1,\dots,N\}$ has positive relative density then $A+A$ contains a very long arithmetic progression. Since his work the problem has received considerable attention, and to help understand it better Green \cite{BJGFFM} introduced a model version which has turned out to be interesting in its own right. It is this question with which we shall concern ourselves in this note.

Suppose, as we shall throughout, that $G:=\F_2^n$ and let $\P_G$ denote the normalized counting measure on $G$. We are interested in what size of subspace one can guarantee that $A+A$ contains, where $A$ is a subset of $G$ of density $\alpha:=\P_G(A)$. 

It turns out that there are various ranges of the density in which we see quite different phenomena. To begin note that if $\alpha>1/2$ then the inclusion-exclusion principle tells us that $\P_G(A \cap (x+A))>0$ for all $x \in G$ and so we have that $A+A=G$; we write this as follows.
\begin{proposition}\label{prop.triv}
Suppose that $A\subset G:=\F_2^n$ has density $\alpha > 1/2$. Then $A+A$ contains a subspace of co-dimension $0$.
\end{proposition}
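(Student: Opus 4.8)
The plan is to take this at face value as an inclusion–exclusion computation, essentially the one the preceding discussion already alludes to. Fix an arbitrary $x \in G$; the goal is to produce $a, a' \in A$ with $a + a' = x$, since then $x \in A + A$ and, $x$ being arbitrary, $A + A = G$, which is precisely a subspace of co-dimension $0$.

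The key step is to observe that the translate $x + A$ has the same density $\alpha$ as $A$, because translation by $x$ is a bijection of $G$ preserving $\P_G$. Hence
\[
\P_G\bigl(A \cap (x+A)\bigr) = \P_G(A) + \P_G(x+A) - \P_G\bigl(A \cup (x+A)\bigr) \geq 2\alpha - 1 > 0,
\]
using $\P_G(A \cup (x+A)) \leq 1$ and the hypothesis $\alpha > 1/2$. In particular $A \cap (x+A) \neq \emptyset$, so there is some $y$ lying in both $A$ and $x + A$; writing $y = x + a'$ with $a' \in A$ and using that in $\F_2^n$ we have $-a' = a'$, we get $x = y + a'$ with $y, a' \in A$, as required.

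There is really no obstacle here: the only point to be mildly careful about is the characteristic-$2$ identity $y - a' = y + a'$, which lets us rearrange $y = x + a'$ into $x = y + a'$ without introducing any sign issues. Everything else is the two-set inclusion–exclusion inequality and the translation-invariance of the counting measure.
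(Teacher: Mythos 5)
Your argument is exactly the paper's: apply inclusion--exclusion to $A$ and $x+A$ to get $\P_G(A\cap(x+A))\geq 2\alpha-1>0$ for every $x$, and conclude $A+A=G$. The proposal is correct and matches the paper's proof, just spelled out in more detail.
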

Once the density dips below $1/2$ things begin to change. In this regime $A$ may be contained in a subspace of co-dimension $1$, and so $A+A$ can, at best, be guaranteed to contain a subspace of co-dimension $1$. To start with this is best possible:
\begin{theorem}\label{thm.epssmall}
Suppose that $A\subset G:=\F_2^n$ has density $\alpha > 1/2-\epsilon$ where $\epsilon \in (0,1/2^9\sqrt{n}]$. Then $A+A$ contains a subspace of co-dimension $1$.
\end{theorem}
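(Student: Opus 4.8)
The plan is to exploit the Fourier side: the obstruction to $A+A$ containing a codimension-$1$ subspace is governed by how concentrated $\wh{1_A}$ is on a single nonzero character. First I would fix a nonzero character $\gamma$ and consider the subspace $V := \{x : \gamma\cdot x = 0\}$ of codimension $1$. We have $A+A \supseteq V$ precisely when, for every $v \in V$, there is some $a \in A$ with $a + v \in A$; equivalently $\P_G(A \cap (v+A)) > 0$ for all $v \in V$. Writing $A_0 := A \cap V$ and $A_1 := A \setminus V$ (the two cosets), the convolution $1_A * 1_A$ restricted to $V$ is $1_{A_0}*1_{A_0} + 1_{A_1}*1_{A_1}$, and we need this to be strictly positive on all of $V$.

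The main step is a stability/robustness argument for inclusion–exclusion. If $\alpha > 1/2 - \epsilon$, then at least one of the cosets — say the one carrying $A_0$ — has relative density (inside $V$, which has density $1/2$) at least $\alpha \ge 1/2 - \epsilon$ as a subset of $G$, hence relative density at least $1 - 2\epsilon$ inside $V$. By inclusion–exclusion inside the group $V$, for any $v \in V$ we get $\P_V(A_0 \cap (v + A_0)) \ge 1 - 4\epsilon > 0$ as soon as $\epsilon < 1/4$. That already shows $A_0 + A_0 = V$, so $A+A \supseteq V$ and we are done — with the far weaker hypothesis $\epsilon < 1/4$. So the genuine content, and the reason the $\sqrt n$ threshold appears, must be that one cannot simply assume a coset is heavy: the honest obstruction is that $A$ could be (close to) equidistributed between the two cosets of \emph{every} index-$2$ subspace, i.e. $\wh{1_A}(\gamma)$ is small for all $\gamma \ne 0$.

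So the real argument runs the other way. Suppose, for contradiction, that for \emph{every} codimension-$1$ subspace $V$ the sumset $A+A$ omits some point of $V$; I would like to derive a lower bound on $\sum_{\gamma \ne 0} |\wh{1_A}(\gamma)|$ or on $\max_{\gamma \ne 0}|\wh{1_A}(\gamma)|$ that contradicts the near-maximal density. Concretely: failure of $A+A \supseteq V$ means $1_{A_0}*1_{A_0}+1_{A_1}*1_{A_1}$ vanishes somewhere on $V$; expanding via Fourier on $V$ and using $\|1_{A_0}\|_1 + \|1_{A_1}\|_1 = \alpha$ with $\alpha$ close to $1/2$, a vanishing of the convolution forces the non-principal Fourier mass of $1_A$ relative to $V$ to be at least of order $1/4 - O(\epsilon)$ in an $L^2$-averaged sense. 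Combined with Parseval, $\sum_{\gamma}|\wh{1_A}(\gamma)|^2 = \alpha = \P_G(A \cap A) \le \alpha$, and the pigeonhole that this must happen for all $2^n - 1$ choices of $V$, one squeezes out a single large non-principal Fourier coefficient. Then I would run the standard dichotomy: a large $|\wh{1_A}(\gamma_0)|$ means $A$ correlates with a coset of $V_0 := \ker \gamma_0$, so $A$ is substantially denser in one coset of $V_0$; iterate or bootstrap this density increment, and once the density inside some codimension-$1$ subspace exceeds $1/2$, apply Proposition~\ref{prop.triv} (inside that subspace) to conclude $A + A$ contains that subspace. The arithmetic has to be arranged so that $\epsilon \le 1/2^9\sqrt n$ is exactly what is needed for the increment to push past the $1/2$ barrier after controlling the loss from restricting to a subspace.

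The hard part will be making the Fourier-analytic dichotomy quantitatively tight enough to reach the $\sqrt n$ threshold rather than something polynomially worse: one must track the density increment carefully and, crucially, handle the possibility that the large Fourier coefficient only yields a density increment on a \emph{codimension-$1$} subspace, so that one loses a factor related to $n$ when iterating down a chain of subspaces. I expect the proof to be organized around a single clean inequality relating $\max_{\gamma \ne 0}|\wh{1_A}(\gamma)|$ to the ``defect'' $1/2 - \alpha$ when $A+A$ misses a hyperplane, and then a short argument that this max coefficient, being at least $\asymp 1/2 - $ something, contradicts $\epsilon$ being as small as $1/2^9\sqrt n$ unless $A+A$ already contains a hyperplane.
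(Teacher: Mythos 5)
The paper's argument is entirely non-Fourier: it sets $S:=(A+A)^c$, observes $\P_G(A+S)\le 1-\alpha<\exp(6\epsilon)\P_G(A)$, and then shows that $S$ cannot contain a translate $s+E$ of a full basis $E$. The engine is Pl\"unnecke's inequality applied in the regime where the doubling constant is $1+O(\epsilon)$, combined with measure concentration in the Hamming metric (Proposition~\ref{prop.concentrate}): one builds a nested chain $X_r\subset A$ whose growth under $2^r k F$ is simultaneously small (by Pl\"unnecke) and almost full (by concentration) once $2^r$ reaches the scale $\sqrt{\log\P_G(X_r)^{-1}}$, yielding a contradiction precisely when $\epsilon\lesssim 1/\sqrt{n}$. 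From this one concludes $S$ lies in an affine hyperplane, so $A+A$ contains a coset of codimension $1$; a short reduction then promotes that coset to an actual subspace. Your proposal never touches any of these ingredients, so this is a genuinely different route — but one with real gaps.

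The central claim in your sketch, that the vanishing of $1_{A_0}*1_{A_0}+1_{A_1}*1_{A_1}$ at one point of a hyperplane $V$, plus a pigeonhole over all $2^n-1$ hyperplanes, ``squeezes out a single large non-principal Fourier coefficient,'' is not justified and is essentially false at the quantitative level you need. Having $1_A*1_A(v)=0$ for a single $v$ gives only
$\sum_{\gamma\ne 0}|\wh{1_A}(\gamma)|^4\ge 2^{-n}\alpha^4$, which after the trivial estimate $\sum_{\gamma\ne 0}|\wh{1_A}(\gamma)|^2=\alpha-\alpha^2$ yields $\max_{\gamma\ne 0}|\wh{1_A}(\gamma)|\gtrsim 2^{-n/2}$, an exponentially small bound that is useless for a density increment. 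Moreover, even granting a useful increment at each step, the iteration you propose descends by codimension $1$ each time with a constant-factor improvement in density, and you yourself flag the resulting loss of a factor ``related to $n$''; that is exactly why this template does not see the $\sqrt{n}$ threshold. The paper gets $\sqrt n$ only because measure concentration supplies a quantitatively much stronger dichotomy than the $L^2$-Fourier one you are invoking. Finally, your opening ``naive'' observation that a single heavy coset finishes the job is correct but vacuous for the hard case (near-equidistribution between cosets of every hyperplane), as you also notice.

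Separately, note the paper's intermediate target is only a \emph{coset} of a codimension-$1$ subspace (Proposition~\ref{prop.epssmall}); upgrading to a genuine subspace requires an additional minimality argument on $U$ which your sketch does not address — your last step invokes Proposition~\ref{prop.triv} inside a hyperplane, which would give $A'+A'=$ that hyperplane, but you first need the density inside some hyperplane to exceed $1/2$, and no step of your sketch actually establishes that.
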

Once $\epsilon \sim 1/\sqrt{n}$, however, a different sort of behavior manifests. The worst known such is exhibited by the so called Niveau set construction of Ruzsa \cite{IZREC} and for us this yields the following theorem.
\begin{theorem}[Green-Ruzsa]\label{thm.construction}
For all $\epsilon \in (2^2/\sqrt{n},1/2]$, there is a set $A=A(\epsilon) \subset G:=\F_2^n$ of density $\alpha > 1/2 - \epsilon$ such that any subspace contained in $A+A$ has co-dimension $\Omega(\epsilon \sqrt{n})$.
\end{theorem}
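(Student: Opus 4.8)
The plan is to take $A$ to be a Hamming ball of radius just below $n/2$; this is (a slight truncation of) Ruzsa's Niveau set in the $\F_2^n$ model. Writing $|x|$ for the Hamming weight of $x\in G$, I would put $k:=\lceil \epsilon\sqrt{n}\rceil-1$ and $w:=\lfloor n/2\rfloor-k$, and set
\[
A:=\{x\in G:|x|\le w\}.
\]
Note that $A$ contains $0$ together with all the standard basis vectors (since $w\ge 1$), so $A$ is not contained in any proper subspace; the failure of $A+A$ to contain a large subspace will therefore have to come from a genuinely global feature of $A$. The hypothesis $\epsilon>4n^{-1/2}$ will be used precisely to guarantee $k\ge 1$, and more quantitatively that $\tfrac34\epsilon\sqrt{n}<k<\epsilon\sqrt{n}$.

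First I would check two facts. The \emph{density}: since the distribution of $|x|$ under $\P_G$ is symmetric about $n/2$ we have $\P_G(|x|\le\lfloor n/2\rfloor)\ge 1/2$, and $A$ is obtained from this ball by deleting $k$ layers, each of mass at most $\binom{n}{\lfloor n/2\rfloor}2^{-n}\le n^{-1/2}$; hence $\alpha=\P_G(A)\ge 1/2-kn^{-1/2}>1/2-\epsilon$ by the choice of $k$. The \emph{sumset identity} $A+A=\{x\in G:|x|\le 2w\}$: the inclusion $\subseteq$ is the subadditivity $|y+z|\le|y|+|z|$ of the weight, while for $\supseteq$ one splits the support of a weight-$m$ vector with $m\le 2w$ into two sets of size at most $w$. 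Since $n-2w=(n-2\lfloor n/2\rfloor)+2k\ge 2k$, this shows $A+A$ omits \emph{every} vector of weight exceeding $2w$ (in particular $2w<n$), i.e. $A+A$ misses the entire Hamming ball of radius $2k$ about the all-ones vector.

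The combinatorial heart is the following lemma: \emph{every subspace $W\le\F_2^n$ of codimension $r$ contains a vector of weight at least $n-r$.} To prove it I would write $W=\bigcap_{i=1}^{r}\{x:\langle a_i,x\rangle=0\}$ with $a_1,\dots,a_r$ linearly independent, let $M$ be the $r\times n$ matrix with rows $a_1,\dots,a_r$ — so $M$ has rank $r$ and its columns span $\F_2^r$ — and set $s:=M\mathbf{1}\in\F_2^r$, where $\mathbf{1}$ is the all-ones vector. Choosing a set $J$ of at most $r$ columns of $M$ summing to $s$ (possible since the columns span $\F_2^r$), the vector $v:=\mathbf{1}+\mathbf{1}_J$ satisfies $Mv=s+s=0$, so $v\in W$, while $|v|=n-|J|\ge n-r$. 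Combining this with the sumset identity: if a subspace $W$ is contained in $A+A$ then $\max_{x\in W}|x|\le 2w$, hence $n-\cod(W)\le 2w$, that is,
\[
\cod(W)\ \ge\ n-2w\ \ge\ 2k\ >\ \tfrac32\epsilon\sqrt{n}\ =\ \Omega(\epsilon\sqrt{n}),
\]
which is the asserted bound.

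There is no deep obstacle here; the content is threefold: recognising that the Niveau (Hamming-ball) construction is the right one, the linear-algebra lemma that a subspace of small codimension must contain a vector of almost-maximal weight, and the elementary binomial estimate that lets one lower the radius of the ball by $\Theta(\epsilon\sqrt{n})$ below $n/2$ — thereby forcing $A+A$ to miss the whole Hamming ball of radius $\Theta(\epsilon\sqrt{n})$ about $\mathbf{1}$ — while the density drops by only $O(\epsilon)$. The one place needing a little care is matching the constants in that last trade-off to the stated range $\epsilon\in(4n^{-1/2},1/2]$; this is exactly where the lower bound on $\epsilon$ gets consumed.
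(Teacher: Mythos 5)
Your proof is correct and follows the same route as the paper: the Niveau/Hamming-ball construction, the observation that $A+A$ only contains vectors of weight at most $2w$, and the rank argument showing that a subspace of co-dimension $r$ must contain a vector of weight at least $n-r$ (you phrase it via the $r\times n$ constraint matrix and flip at most $r$ bits of $\mathbf{1}$, the paper via an $(n-d)\times n$ spanning matrix and solves for $n-d$ coordinates to equal $1$, but these are the same idea dualised). The one genuine, if minor, difference is that you estimate the density elementarily using symmetry of the binomial distribution together with $\binom{n}{\lfloor n/2\rfloor}2^{-n}\le n^{-1/2}$, whereas the paper invokes the Berry--Esseen inequality to compare to a Gaussian tail; your version is cleaner and self-contained, and the constant-tracking ($k=\lceil\epsilon\sqrt{n}\rceil-1$, $3\epsilon\sqrt{n}/4<k<\epsilon\sqrt{n}$ under $\epsilon\sqrt{n}>4$) checks out.
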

We are not able to prove a matching upper bound, although we are able to establish the following weak complement.
\begin{theorem}\label{thm.new}
Suppose that $A\subset G:=\F_2^n$ has density $\alpha > 1/2 - \epsilon$ where $\epsilon \in (0,1/2]$. Then $A+A$ contains a subspace of co-dimension $O(\frac{n}{ \log \epsilon^{-1}})$.
\end{theorem}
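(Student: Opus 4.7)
The approach is Fourier-analytic. Writing $1_A \ast 1_A(x) = \sum_\gamma |\wh{1_A}(\gamma)|^2 \gamma(x)$ and separating the characters trivial on a subspace $V \subseteq G$ from the rest, the triangle inequality gives, for every $x \in V$,
\[
1_A \ast 1_A(x) \;\geq\; 2\sum_{\gamma \in V^\perp} |\wh{1_A}(\gamma)|^2 \;-\; \alpha.
\]
Thus $V \subseteq A+A$ whenever $\sum_{\gamma \in V^\perp} |\wh{1_A}(\gamma)|^2 > \alpha/2$. Since $|\wh{1_A}(0)|^2 = \alpha^2$, the real task is to locate a subspace $W \subseteq \wh{G}$ of small dimension whose annihilator $V = W^\perp$ picks up additional non-trivial Fourier mass of at least $\alpha/2 - \alpha^2 = \alpha\epsilon$.

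The strategy is an iterative density-increment with Theorem~\ref{thm.epssmall} as the base case. Starting from $V_0 = G$, at stage $k$ we carry a subspace $V_k \subseteq G$ and, after translating $A$ (which preserves $A+A$), a set $A_k \subseteq V_k$ of density $\alpha_k > 1/2 - \epsilon_k$ in $V_k$. If $\alpha_k > 1/2$ we invoke Proposition~\ref{prop.triv}; if $\epsilon_k \leq 1/(2^9 \sqrt{\dim V_k})$ we apply Theorem~\ref{thm.epssmall} in $V_k$ at the cost of a single extra codimension. Otherwise Parseval yields $\sum_{\gamma \ne 0} |\wh{1_{A_k}}(\gamma)|^2 = \alpha_k(1-\alpha_k)$, bounded below, and we wish to produce $V_{k+1} \subset V_k$ with $\alpha_{k+1} \geq \alpha_k + \epsilon_k / 2$. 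When some non-trivial character has $|\wh{1_{A_k}}(\gamma)| \gtrsim \epsilon_k$ the corresponding hyperplane of $V_k$ does the job at codimension one; in the opposite, quasi-random regime one appeals instead to Chang's theorem with a threshold tuned to the current $\epsilon_k$ to exhibit a low-dimensional spectral subspace inside $\wh{V_k}$ whose annihilator plays the same role.

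The delicate point is the codimension budget. Roughly $O(\log \epsilon^{-1})$ iterations are needed to push $\epsilon_k$ below the threshold of Theorem~\ref{thm.epssmall}, so if every step were to cost codimension one we would be done with room to spare; the sole obstruction is the quasi-random regime. The crux is therefore to bound the Chang subspace used there by $O(n/\log^2 \epsilon^{-1})$ at each stage while guaranteeing the claimed density jump of at least $\epsilon_k/2$. This in turn requires an $L^p$-refinement (or a dyadic decomposition of the spectrum) rather than raw Chang, together with the book-keeping that $\dim V_k \geq n - \sum_{j<k} \cod(V_{j+1}/V_j)$ stays close to $n$ throughout. Once this per-step bound is established, summing over the $O(\log \epsilon^{-1})$ iterations yields the target total codimension $O(n/\log \epsilon^{-1})$.
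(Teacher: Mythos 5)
The paper's proof of Theorem~\ref{thm.new} proceeds via Theorem~\ref{thm.underlying}, whose engine is the iteration lemma (Lemma~\ref{lem.iteration}), and the key structural idea is to iterate on the complement $S := V \setminus (A+A)$ rather than on $A$ itself. Writing $A$ as a union of fibres $A_W$ over cosets $W$ of $V$, one observes that $1_S$ is $L^2(V)$-orthogonal to $\sum_W 1_{A_W}\ast 1_{A_W}$; Plancherel then forces $1_S$ to have a \emph{negative} Fourier coefficient of size at least $\frac{\alpha}{1-\alpha}\P_V(S)$, and restricting to the corresponding hyperplane multiplies $\P_V(S)$ by $\frac{1-2\alpha}{1-\alpha}$ while costing only co-dimension $1$. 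No dichotomy is needed: \emph{every} step makes progress, and after $\sim n/\log_2\frac{1-\alpha}{1-2\alpha}$ steps the relative measure of $S$ drops below $1/|V_i|$, forcing $S=\emptyset$. With $\alpha=1/2-\epsilon$ this gives co-dimension $O(n/\log\epsilon^{-1})$.

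Your proposal instead runs a density increment on $A$, splitting into a structured case (a large Fourier coefficient of $1_{A_k}$) and a quasi-random case. The structured case is fine, but the quasi-random case is a genuine gap, not merely an omitted detail. The inequality you start from, $1_A\ast 1_A(x)\geq 2\sum_{\gamma\in V^\perp}|\wh{1_A}(\gamma)|^2-\alpha$ for $x\in V$, requires the annihilated spectrum to carry more than half the $L^2$ Fourier mass, but Chang's theorem controls only the \emph{dimension} of $\mathrm{Spec}_\rho(A)$, not the Parseval mass supported on it. For a quasi-random $A$ (a random set of density $1/2-\epsilon$, say), essentially all of the mass $\alpha-\alpha^2$ sits below any fixed threshold, so the spectral subspace carries almost none of it, and no $L^p$ refinement or dyadic decomposition of the spectrum repairs this: the bound you need is on $\sum_{\gamma\in V^\perp}|\wh{1_A}(\gamma)|^2$, and this can be as small as $\alpha^2+o(1)$ for any $V$ of bounded co-dimension. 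Thus the step that is supposed to guarantee both a density jump of $\epsilon_k/2$ \emph{and} co-dimension $O(n/\log^2\epsilon^{-1})$ has no mechanism behind it. The paper's trick — placing the Fourier analysis on $1_S$ and exploiting the identity $\langle\sum_W 1_{A_W}\ast 1_{A_W},1_S\rangle=0$ rather than trying to lower-bound $1_A\ast 1_A$ pointwise — is what avoids this obstruction, and it also removes your reliance on Theorem~\ref{thm.epssmall} as a base case and the attendant book-keeping of $\dim V_k$.
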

All previous work on showing that sumsets contain large subspaces has concentrated on the case of positive density (rather than density close to $1/2$) and has consequently produced weaker results.
It turns out the argument used for the previous theorem also yields a new result in this case.
\begin{theorem}\label{thm.new2}
Suppose that $A\subset G:=\F_2^n$ has density $\alpha > 0$. Then $A+A$ contains a subspace of dimension $\Omega(\alpha n)$.
\end{theorem}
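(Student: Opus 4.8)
The plan is to run a density-increment argument that at each stage works with the restriction of $A$ to a coset of a subspace of still-large dimension, trying to push the relative density up towards $1/2$. Suppose we have reached a subspace $W\le G$ of dimension $m$ and a coset $x+W$ on which $A$ has relative density $\beta\ge\alpha$. If $\beta>1/2$ we are done by Proposition~\ref{prop.triv} applied inside $W$, which puts the whole of $W$, and hence a dimension-$m$ subspace, into $A+A$. If $\beta$ is already close enough to $1/2$ for Theorem~\ref{thm.epssmall} to apply in $W$, we are again done: that theorem places a subspace of dimension $m-1$ inside $A+A$. Otherwise we look for structure: if the indicator of $A\cap(x+W)$, regarded as a function on $W$, has a nonzero Fourier coefficient of modulus at least a threshold $\tau$, then one of the two cosets of the corresponding index-two subspace carries $A$ with relative density at least $\beta+\tau$, and we pass to it, losing a single dimension. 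Each such increment raises the density by at least $\tau$, so at most $O(\tau^{-1})$ of them occur before one of the two terminal cases is reached; taking $\tau$ of order $1/n$ keeps the number of steps $O(n)$, so the subspace finally produced still has dimension $\Omega(n)\ge\Omega(\alpha n)$, with room to spare.

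The argument breaks down precisely in the remaining case: the current set $A'=A\cap(x+W)$ has no nonzero Fourier coefficient as large as $\tau$, yet its density $\beta$ is bounded away from $1/2$ (and from below by $\alpha$) -- the quasirandom-but-not-dense regime. One cannot conclude directly that $A'+A'=W$, since that would require $\sum_{\xi\ne 0}|\wh{1_{A'}}(\xi)|^2<\beta^2$, i.e.\ $\beta>1/2$; a crude second-moment bound on $\P_W(W\setminus(A'+A'))$ in terms of $\tau$ is also too weak to then produce a subspace of dimension $\Omega(\alpha m)$ avoiding $W\setminus(A'+A')$ by a random-subspace argument. Getting a genuinely strong bound on $\P_W(W\setminus(A'+A'))$ in this regime is the heart of the matter, and is exactly where the same mechanism as in the proof of Theorem~\ref{thm.new} is invoked. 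The delicate point is to calibrate $\tau$ so that the increment count and the quality of the quasirandom conclusion are simultaneously under control; it is this balance that is responsible for the bound being $\Omega(\alpha n)$ rather than something larger.

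It is worth recording why the most naive reduction does not suffice. If one could always locate a coset of a subspace of dimension $\Omega(\alpha n)$ on which $A$ has density exceeding $1/2$, Proposition~\ref{prop.triv} would finish at once; but a quasirandom $A$ has density essentially $\alpha$ on every coset of every subspace that is not of bounded dimension, so no such coset exists, and the quasirandom case must genuinely be treated on its own terms. Thus the main obstacle is precisely that case; granted a satisfactory treatment of it, assembling the pieces -- terminating the increment early whenever Proposition~\ref{prop.triv} or Theorem~\ref{thm.epssmall} becomes applicable, and tallying the total dimension lost along the way -- is routine.
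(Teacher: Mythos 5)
Your proposal correctly identifies the key obstruction --- the quasirandom regime, where $A'$ has density $\beta \in [\alpha, 1/2 - \Omega(1)]$ and no large nonzero Fourier coefficient --- but it does not overcome it: you defer to ``the same mechanism as in the proof of Theorem~\ref{thm.new}.'' That is circular, since in the paper Theorems~\ref{thm.new} and~\ref{thm.new2} are both immediate corollaries of a single result (Theorem~\ref{thm.underlying}); invoking the mechanism behind Theorem~\ref{thm.new} amounts to invoking the very proof you are being asked to supply. So the core of the argument is missing.

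It is also worth noting that the paper's argument never encounters a quasirandom case, because its iteration is structured quite differently from yours. Rather than running a density increment on $A$ (which must split into a large-Fourier-coefficient subcase and a quasirandom subcase), the paper keeps $A$ fixed and iterates on a decreasing chain of subspaces $V = V_0 \supset V_1 \supset \cdots$, each of codimension one in the last, tracking the \emph{measure of the uncovered part}, $\P_{V_i}(V_i \setminus (A+A))$, rather than the density of $A$ on a single coset. Writing $S = V \setminus (A+A)$ and $A_W = (A \cap W) - x_W$ for each coset $W \in G/V$, the nonnegative function $\sum_{W}{1_{A_W} \ast 1_{A_W}}$ is supported on $(A+A) \cap V$, so Plancherel gives
\begin{equation*}
0 = \sum_{\gamma \in \wh{V}}\Bigl(\sum_{W \in G/V}|\wh{1_{A_W}}(\gamma)|^2\Bigr)\wh{1_S}(\gamma).
\end{equation*}
The zero frequency contributes a nonnegative amount $\P_V(S)\sum_W\P_V(A_W)^2$, so there is \emph{always} some $\gamma \ne 0$ with $\wh{1_S}(\gamma)$ negative, and in fact of modulus at least $\frac{\alpha}{1-\alpha}\P_V(S)$ (the bound uses Cauchy--Schwarz across all cosets $W$, which is why the global density $\alpha$ enters rather than a possibly degraded local density). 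Passing to $V' = \{\gamma\}^\perp$ shrinks $\P_{V'}(V' \setminus (A+A))$ by a factor $\frac{1-2\alpha}{1-\alpha}$ unconditionally --- with no structural hypothesis on $A$. After roughly $n/\log_2\frac{2-2\alpha}{1-2\alpha}$ steps this quantity drops below $|V_i|^{-1}$, forcing $V_i \subset A + A$; the codimension used is at most $(1-\alpha/\log 2)n + O(1)$, giving dimension $\Omega(\alpha n)$. The moral is that the large Fourier coefficient the paper extracts belongs to $1_S$, not to $1_A$, and its existence is guaranteed by positivity of the convolution rather than by a dichotomy --- which removes the quasirandom case you flagged, and with it the need for your scaffolding of Proposition~\ref{prop.triv} and Theorem~\ref{thm.epssmall}.
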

This improves upon \cite[Theorem 9.3]{BJGFFM} where a lower bound on the dimension of the form $\Omega(\alpha^2 n)$ was given.  Recently Croot and Sisask \cite{ESCOS} also established an improvement of roughly the strength of Theorem \ref{thm.new2} but by different arguments in a much more general setting.

Finally, Fourier analysis is notoriously weak when dealing with thin sets and, indeed, if $\alpha = o(\log n/ n)$ then it turns out that an elementary counting argument of Croot, Ruzsa and Schoen \cite{ESCIZRTS} supersedes Theorem \ref{thm.new2}.

As indicated we make use of the Fourier transform the group $G:=\F_2^n$.  In particular we denote the dual group by $\wh{G}$ and define the transform to be the map taking $f \in L^1(G)$ to
\begin{equation*}
\wh{f}(\gamma):=\E_{x \in G}{f(x)\overline{\gamma(x)}}.
\end{equation*}
We use basic results from Fourier analysis without comment and the reader interested in details may wish to consult Tao and Vu \cite{TCTVHV}.

he note now splits into three further sections in which Theorem \ref{thm.epssmall}, Theorem \ref{thm.construction} and Theorem \ref{thm.new} (and \ref{thm.new2}) are proved, followed by some concluding remarks in the final section including a discussion of the link with the integer version of the problem.

\section{Proof of Theorem \ref{thm.epssmall}}

The argument involves two main tools. The first is Pl{\"u}nnecke's inequality, \cite{HP}, which we record now. One of the key ideas in our work is to make use of it in the region when $K \sim 1$.
\begin{theorem}[{\cite[Corollary 6.28]{TCTVHV}}]
Suppose that $A,B \subset G:=\F_2^n$ are such that $\P_G(A+B) \leq K\P_G(A)$. Then for any positive integer $k$ there is a set $X \subset A$ with $\P_G(X+kB) \leq K^k\P_G(X)$.
\end{theorem}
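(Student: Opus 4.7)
The plan is to adopt Petridis's streamlined proof of Pl\"unnecke's inequality, which replaces the classical graph-theoretic argument by a single clean covering lemma and then an easy iteration.

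The opening move is to choose $X^* \subseteq A$ nonempty so as to minimize the ratio $K^* := \P_G(X^* + B)/\P_G(X^*)$; since the choice $X = A$ itself witnesses $K^* \leq K$, it is enough to establish $\P_G(X^* + kB) \leq (K^*)^k \P_G(X^*)$ for this specific $X^*$, and then take $X = X^*$.

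The crux is then the following assertion, to be proved by induction on $|C|$: for every nonempty finite set $C \subseteq G$,
\[
\P_G(X^* + B + C) \leq K^* \cdot \P_G(X^* + C).
\]
The case $|C| = 1$ is just the defining equality for $K^*$ after a harmless translation. For the inductive step I split $C = C_0 \sqcup \{c\}$ and introduce the auxiliary set
\[
Z := \{x \in X^* : x + c \in X^* + C_0\}.
\]
Two elementary observations drive the argument. A direct check gives that $(X^* + c) \cap (X^* + C_0)$ has size $|Z|$, while the very definition of $Z$ forces $Z + B + c \subseteq X^* + B + C_0$, so $(X^* + B + c) \cap (X^* + B + C_0)$ has size at least $|Z + B|$. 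Inclusion--exclusion then yields the identity $|X^* + C| = |X^* + C_0| + |X^*| - |Z|$ together with the inequality $|X^* + B + C| \leq |X^* + B + C_0| + |X^* + B| - |Z + B|$. Feeding in the inductive hypothesis, the equality $|X^* + B| = K^* |X^*|$, and the bound $|Z + B| \geq K^* |Z|$ (valid because $Z \subseteq X^* \subseteq A$, via the minimality defining $K^*$), everything collapses into $|X^* + B + C| \leq K^* |X^* + C|$.

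With the covering lemma in hand, the theorem falls out by iteration. Applying the lemma successively with $C = B,\, 2B,\, \dots,\, (k-1)B$ yields
\[
\P_G(X^* + kB) \leq K^* \cdot \P_G(X^* + (k-1)B) \leq \dots \leq (K^*)^k \P_G(X^*) \leq K^k \P_G(X^*),
\]
as required. The delicate point, and the one I would watch most carefully, is the inductive step of the lemma: the whole scheme rests on the minimality of $X^*$ being strong enough to apply to the auxiliary subset $Z$ (rather than merely to $A$), which is precisely what allows the induction to close with the constant $K^*$ and not with some weaker power of it.
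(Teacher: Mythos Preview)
Your argument is correct; this is Petridis's elementary proof of Pl\"unnecke's inequality, carried out cleanly. One small point worth making explicit is the degenerate case $Z=\emptyset$ in the inductive step: the minimality defining $K^*$ is taken over nonempty subsets of $A$, so strictly speaking it does not apply to $Z=\emptyset$, but of course $|Z+B|\geq K^*|Z|$ is then $0\geq 0$ and the induction closes anyway.

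As for comparison with the paper: the paper does not prove this statement at all. It is quoted verbatim as \cite[Corollary 6.28]{TCTVHV} and used as a black box, so there is no ``paper's own proof'' to compare with. The reference (Tao--Vu) proceeds via Pl\"unnecke's original commutative-graph argument, bounding magnification ratios through Menger-type reasoning. Your route is genuinely different and considerably shorter: by passing at the outset to the ratio-minimising subset $X^*\subset A$, you buy exactly the monotonicity $|Z+B|\ge K^*|Z|$ for \emph{every} $Z\subset A$, and this is what makes the simple inclusion--exclusion induction on $|C|$ close with the sharp constant $K^*$ rather than a weaker power. The trade-off is that the graph-theoretic proof yields somewhat more (e.g.\ layered statements about $iB$ versus $jB$), but for the form actually used in the paper your argument is both sufficient and more transparent.
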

The second tool is measure concentration on the cube for which we shall follow McDiarmid \cite{CM}.  The idea of using measure concentration was introduced to difference set problems by Wolf in \cite{JWSPDS} and that is the inspiration for our application.

For any natural number $n$ we write $Q_n$ for the cube $\{0,1\}^n$.  The Hamming metric on $Q_n$ is defined in the usual way:
\begin{equation*}
d(x,y):=|\{i:x_i \neq y_i\}| \textrm{ for all } x,y \in Q_n.
\end{equation*}
For any set $A \subset Q_n$ and $r\geq 0$ we write
\begin{equation*}
\Ham_r(A):=\{x \in Q_n: d(x,y) \leq r \textrm{ for some } y \in A\},
\end{equation*}
that is the set of points of at most distance $r$ from $A$.  Measure concentration provides a lower bound for the density of this set.
\begin{theorem}[{\cite[Proposition 7.7]{CM}}]\label{thm.concentrateG}
Suppose that $A$ is a non-empty subset of $Q_n$.  Then for any $r\geq 0$ we have
\begin{equation*}
\P_{Q_n}(\Ham_r(A)) \geq 1-\frac{\exp(-r^2/2n)}{\P_{Q_n}(A)}.
\end{equation*}
\end{theorem}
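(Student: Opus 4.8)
The plan is to derive the theorem from the bilinear concentration estimate
\begin{equation*}
\P_{Q_n}(A)\,\P_{Q_n}(B) \leq \exp(-d(A,B)^2/n) \qquad \text{for all non-empty } A,B\subseteq Q_n,
\end{equation*}
where $d(A,B):=\min\{d(a,b):a\in A,\ b\in B\}$. Granting this, if $\Ham_r(A)=Q_n$ there is nothing to prove, and otherwise we apply the estimate with $B:=Q_n\setminus\Ham_r(A)$. By definition every point of $B$ lies at distance strictly greater than $r$ from every point of $A$, so $d(A,B)>r$, and hence
\begin{equation*}
\P_{Q_n}(Q_n\setminus\Ham_r(A)) = \P_{Q_n}(B) \leq \frac{\exp(-d(A,B)^2/n)}{\P_{Q_n}(A)} \leq \frac{\exp(-r^2/2n)}{\P_{Q_n}(A)},
\end{equation*}
which rearranges to the claimed inequality. (When $\exp(-r^2/2n)/\P_{Q_n}(A)\geq 1$ the statement is vacuous but still correct.)

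To prove the bilinear estimate, set $g(x):=d(x,A)$. This function is $1$-Lipschitz in each coordinate of $Q_n$, it vanishes on $A$, and it is at least $t:=d(A,B)$ on $B$. Exposing the coordinates of a uniformly random $x\in Q_n$ one at a time produces the Doob martingale $\E g = Y_0,Y_1,\dots,Y_n = g(x)$; the step I would carry out with care is that, conditioned on the first $i-1$ coordinates, the increment $Y_i-Y_{i-1}$ takes only two values, symmetric about $0$ and lying in $[-\tfrac12,\tfrac12]$, the factor $\tfrac12$ arising from averaging the coordinate-$i$ difference of $g$ over the remaining coordinates. The conditional Hoeffding inequality together with the tower property then gives $\E[\exp(\mu(g-\E g))]\leq\exp(n\mu^2/8)$ for every real $\mu$, and the same bound with $-g$ in place of $g$. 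Consequently, for $\mu>0$,
\begin{equation*}
\P_{Q_n}(A) = \P_{Q_n}(g\leq 0) \leq \E[\exp(-\mu g)] \leq \exp(-\mu\E g + n\mu^2/8),
\end{equation*}
\begin{equation*}
\P_{Q_n}(B) \leq \P_{Q_n}(g\geq t) \leq \exp(-\mu t)\,\E[\exp(\mu g)] \leq \exp(-\mu t + \mu\E g + n\mu^2/8).
\end{equation*}
Multiplying these two bounds the unknown mean $\E g$ cancels and we obtain $\P_{Q_n}(A)\P_{Q_n}(B)\leq\exp(-\mu t + n\mu^2/4)$; the choice $\mu=2t/n$ gives $\exp(-t^2/n)$, which is the required estimate.

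I expect the main obstacle to be pinning down the martingale increment bound with the correct constant. The crude estimate $|Y_i-Y_{i-1}|\leq 1$ only yields $\E[\exp(\mu(g-\E g))]\leq\exp(n\mu^2/2)$ and hence the weaker bilinear bound $\P_{Q_n}(A)\P_{Q_n}(B)\leq\exp(-d(A,B)^2/4n)$, which is not strong enough to recover the factor $\tfrac12$ in the exponent of the statement; so one genuinely has to exploit that averaging over the later coordinates halves the increment. Everything else — the reduction from the bilinear estimate to the theorem, and the handling of the degenerate cases $B=\emptyset$ (equivalently $\Ham_r(A)=Q_n$) and $r=0$ — is routine.
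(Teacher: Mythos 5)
The paper does not re-prove this statement; it is cited directly from McDiarmid's survey, and the route you have reconstructed is essentially McDiarmid's own: establish the bilinear concentration inequality $\P_{Q_n}(A)\P_{Q_n}(B)\leq\exp(-d(A,B)^2/n)$ via the Doob martingale of $g(x)=d(x,A)$, then feed in $B=Q_n\setminus\Ham_r(A)$. Your execution is correct. The step you singled out as delicate --- the conditional increment bound $|Y_i-Y_{i-1}|\leq\tfrac12$ --- does hold exactly as you describe: conditioned on $x_1,\dots,x_{i-1}$, writing $f_j:=\E[g\mid x_1,\dots,x_{i-1},x_i=j]$, the $1$-Lipschitz property of $g$ in the $i$th coordinate gives $|f_0-f_1|\leq1$, while $Y_{i-1}=\tfrac12(f_0+f_1)$ and $Y_i\in\{f_0,f_1\}$, so $Y_i-Y_{i-1}=\pm\tfrac12(f_1-f_0)$; Azuma--Hoeffding with these conditionally deterministic intervals of length at most $1$ then yields $\E[\exp(\mu(g-\E g))]\leq\exp(n\mu^2/8)$, and the two Chernoff bounds, cancellation of $\E g$, and optimization in $\mu$ go through as you write. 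One small remark: since every $b\in B$ is at Hamming distance at least $r+1$ from $A$, you have $d(A,B)\geq r+1$, and so your argument actually delivers $\P_{Q_n}(\Ham_r(A))\geq 1-\exp(-(r+1)^2/n)/\P_{Q_n}(A)$, which is a factor of two better in the exponent than the statement requires; this slack also means the weaker bilinear bound $\exp(-d(A,B)^2/2n)$ would already have sufficed, though not the crude $\exp(-d(A,B)^2/4n)$ you correctly ruled out.
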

The formal similarity of $\F_2^n$ and $Q_n$ immediately tells us how we shall make use of this result.  Suppose that $E=\{e_1,\dots,e_n\}$ is a basis of $G:=\F_2^n$, so that the map 
\begin{equation*}
\phi_E: Q_{n} \rightarrow G; x \mapsto x_1.e_1+\dots+x_n.e_n
\end{equation*}
is a bijection. Sets in $G$ inherit certain growth properties from those in $Q_n$ as follows. Writing $F:=E \cup \{0_G\}$ we have that
\begin{equation}\label{eqn.ku}
A+rF \supset \phi_E(\Ham_r(\phi_E^{-1}( A))).
\end{equation}
To see this note that if $z$ is a member of the right hand side then there is some $y \in A$ such that
\begin{equation*}
d(\phi_E^{-1}(z),\phi_E^{-1}(y)) \leq r.
\end{equation*}
Note that this is perfectly well defined since $\phi_E^{-1}$ is a bijection.  Now, let $x \in \{0,1\}^n$ be such that
\begin{equation*}
x_i = \begin{cases} 1 & \textrm{ if } \phi_E^{-1}(z)_i \neq \phi_E^{-1}(y)_i\\ 0 & \textrm{ otherwise,}\end{cases}
\end{equation*}
so that
\begin{equation*}
\phi_E^{-1}(z) = \phi_E^{-1}(y + \phi_E(x)).
\end{equation*}
On the other hand the number of $i$ such that $x_i \neq 0$ is at most $r$ and $0_G \in F$, whence $z \in y+rF$.  (\ref{eqn.ku}) then follows.  In light of this we have the following consequence of Theorem \ref{thm.concentrateG}.
\begin{proposition}\label{prop.concentrate}
Suppose that $G=\F_2^n$ and $E$ is a basis of $G$, $A$ is a non-empty subset of $G$ and $F:=E \cup  \{0_G\}$.  Then for any $r\geq 0$ we have
\begin{equation*}
\P_{G}(A+rF) \geq 1-\frac{\exp(-r^2/2n)}{\P_{G}( A)}.
\end{equation*}
\end{proposition}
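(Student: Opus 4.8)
The plan is to simply transport Theorem \ref{thm.concentrateG} across the bijection $\phi_E$ and combine it with the inclusion \eqref{eqn.ku}, which has already been established. Concretely, I would first set $A' := \phi_E^{-1}(A) \subset Q_n$, noting that $A'$ is non-empty since $A$ is. Since $\phi_E$ is a bijection between two sets of the same size $2^n$, it is measure-preserving in the sense that $\P_{Q_n}(S) = \P_G(\phi_E(S))$ for every $S \subset Q_n$; in particular $\P_{Q_n}(A') = \P_G(A)$ and $\P_{Q_n}(\Ham_r(A')) = \P_G(\phi_E(\Ham_r(A')))$.

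Next I would apply Theorem \ref{thm.concentrateG} to $A'$, which gives
\begin{equation*}
\P_{Q_n}(\Ham_r(A')) \geq 1 - \frac{\exp(-r^2/2n)}{\P_{Q_n}(A')} = 1 - \frac{\exp(-r^2/2n)}{\P_G(A)}.
\end{equation*}
Translating the left-hand side through $\phi_E$ yields $\P_G(\phi_E(\Ham_r(\phi_E^{-1}(A)))) \geq 1 - \exp(-r^2/2n)/\P_G(A)$. Finally, the inclusion \eqref{eqn.ku} states that $A + rF \supset \phi_E(\Ham_r(\phi_E^{-1}(A)))$, so monotonicity of $\P_G$ gives $\P_G(A+rF) \geq \P_G(\phi_E(\Ham_r(\phi_E^{-1}(A))))$, and chaining the two inequalities completes the proof.

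There is essentially no obstacle here — the real content of the statement has already been isolated in \eqref{eqn.ku} and in the cited concentration inequality, and all that remains is the bookkeeping observation that a bijection between finite sets of equal cardinality preserves normalized counting measure. The one point worth stating explicitly is that $\phi_E$ really is a bijection (it is linear, and sends a basis of $Q_n$ viewed in the obvious way to the basis $E$ of $G$, hence is injective and therefore bijective by counting), which is why pushing forward $\Ham_r(A')$ and pulling back $A$ are both well defined.
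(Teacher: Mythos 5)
Your proof is correct and coincides with the paper's intended argument: the paper presents \eqref{eqn.ku} and then states Proposition~\ref{prop.concentrate} as a direct consequence of Theorem~\ref{thm.concentrateG}, with the transfer via the measure-preserving bijection $\phi_E$ left implicit exactly as you spell out.
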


The heart of the argument is the following asymmetric version of Theorem \ref{thm.epssmall}.
\begin{proposition}\label{prop.epssmall}
Suppose that $A\subset G:=\F_2^n$ has density $\alpha > 1/2-\epsilon$ where $\epsilon \in (0,1/2^9\sqrt{n}]$. Then $A+A$ contains a coset of a subspace of co-dimension $1$.
\end{proposition}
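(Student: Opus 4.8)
The plan is to combine the two tools just introduced—Plünnecke's inequality in the regime $K\approx 1$, and measure concentration via Proposition~\ref{prop.concentrate}—to force $A+A$ to fill up a coset of a hyperplane. First I would use the density hypothesis to get a Fourier-analytic dichotomy. Either $\wh{A}$ has a large nontrivial Fourier coefficient or it does not. If $A$ has no large nontrivial Fourier coefficient, then $A$ is quasirandom and $A+A=G$ by a standard argument (the number of representations $1_A * 1_A(x)$ is close to $\alpha^2$ uniformly, which is positive since $\alpha$ is close to $1/2$), giving a coset of a codimension-$0$ subspace and hence certainly of codimension $1$. So the interesting case is when there is a character $\gamma \neq 1$ with $|\wh{1_A}(\gamma)|$ large. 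Writing $V := \ker \gamma$, a hyperplane, a large $|\wh{1_A}(\gamma)|$ means $A$ is concentrated (in a weighted sense) on one coset of $V$; more precisely one of the two sets $A_0 := A \cap V$ or $A_1 := A \cap (V+t)$ (where $\gamma(t)=-1$) has density inside its coset noticeably exceeding $\alpha$.

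The next step is to show that one of $A_0, A_1$ has density, relative to the hyperplane $V$ (a copy of $\F_2^{n-1}$), at least something like $1/2 + c\epsilon'$ for a controlled quantity, or at any rate large enough to run a doubling argument. Here is where I would be careful about the bookkeeping: a single large Fourier coefficient of size $\delta$ translates into one coset having relative density roughly $\alpha + \delta$ and the other roughly $\alpha - \delta$; since $\alpha > 1/2 - \epsilon$ with $\epsilon = O(1/\sqrt n)$, the heavier coset $B$ sits inside $V \cong \F_2^{n-1}$ with relative density bounded below by roughly $1/2 - \epsilon + \delta$. The point of introducing $\delta$ is that now $B+B \subset V$, and I want to show $B+B$ is all of $V$ or close to it, which would give a coset of codimension $1$ inside $A+A \supset B+B$. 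For this I would apply Plünnecke (with $K$ close to $1$): since $B$ lives in $V$ and has density close to $1/2$ there, $\P_V(B+B) \le K \P_V(B)$ forces $K \le 1/\P_V(B) \le 2 + O(\epsilon)$, so $K$ is close to $2$ rather than close to $1$—hmm, so one should instead iterate: for $B$ of density near $1/2$ the set $B+B$ has density near $1$ (by Proposition~\ref{prop.triv}-type reasoning one needs density exceeding $1/2$, which may fail), so the right move is to first pass to $B+B$ which has density close to $1$, then use measure concentration to fill in the rest.

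Concretely: let $B$ be the heavier coset, of density $\beta$ in $V \cong \F_2^{n-1}$ with $\beta \ge 1/2 - \epsilon + \delta$. By Plünnecke applied inside $V$ with $k=1$, $\P_V(B+B) \le K$ where $K = \P_V(B+B)/\P_V(B)$, and there is $X \subset B$ with $\P_V(X+2B) \le K^2 \P_V(X)$; more usefully, $B+B$ itself has density at least $2\beta - \P_V(B \cap (x+B))$-type bounds... the cleanest route is: because $\beta$ is close to $1/2$, $B+B$ has density $\ge 1 - O(\epsilon')$ in $V$ (the number of $x \in V$ with $1_B * 1_B(x) = 0$ is at most $O(\epsilon')|V|$, using $\|\wh{1_B} - \beta\delta_0\|_2^2$ small after we have already extracted the big coefficient). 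Then apply Proposition~\ref{prop.concentrate} with a basis $E'$ of $V$ and radius $r \asymp \sqrt{n \log(1/\epsilon')}$: since $\P_V(B+B)$ is close to $1$, $(B+B) + rF'$ is \emph{all} of $V$. Now $rF' \subset (B+B) + (B+B) $? No—$rF'$ is not inside $B+B$. Instead I use the inclusion $(B+B) + rF' = V$ together with the fact that $rF' \subset kB + kB$ for suitable $k$: by Plünnecke, a set $X \subset B$ satisfies $\P_V(X + kB)$ controlled, and iterating measure concentration plus doubling one gets $rF \subset B + B$ after absorbing; more precisely, apply Plünnecke to get $X \subset B$ with $\P_V(X + 2B) \le K^2\P_V(X)$, note $X + 2B \supset X + B + B$, and since $\P_V(X+B+B)$ is forced close to $1$ while translates of $B+B$ cover $V$, one concludes $X + 2B = V$, i.e. $2B = B+B$ covers a translate of everything; unwinding, $B + B + (\text{something of bounded size}) = V$ and that bounded-size something is itself in $B+B$. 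I expect the main obstacle to be exactly this last gluing: getting the measure-concentration fattening radius $r$ to be \emph{absorbed} into $B+B$ rather than added to it, which is where the $K \sim 1$ use of Plünnecke is essential and where the constraint $\epsilon \le 1/2^9\sqrt n$ gets used to keep $r^2/2n$ large enough that the concentration bound beats $1/\P_V(B) \approx 2$. Finally, the asymmetric conclusion (coset of codimension $1$, not subspace) is exactly what falls out, since $B$ lies in a coset of $V$ and $B+B$ lies in $V$ but the covering argument naturally produces a translate.
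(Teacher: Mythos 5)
Your proposal sets out along a different road from the paper's proof and, unfortunately, the road washes out at two separate points.

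\emph{The Fourier dichotomy is at the wrong scale.} You open by splitting on whether $A$ has a large nontrivial Fourier coefficient, saying that in the quasirandom case $1_A * 1_A(x) \approx \alpha^2$ gives $A+A=G$. But the error term in $1_A*1_A(x)=\alpha^2+\sum_{\gamma\neq 0}|\wh{1_A}(\gamma)|^2\gamma(x)$ is controlled only by the full $\ell^2$ mass $\sum_{\gamma\neq 0}|\wh{1_A}(\gamma)|^2=\alpha-\alpha^2$, which at $\alpha\approx 1/2$ is $\approx 1/4\approx \alpha^2$, so no positivity follows. To force positivity of the twofold convolution from $\sup_{\gamma\neq 0}|\wh{1_A}(\gamma)|\leq\delta$ you would need $\delta$ of order $|G|^{-1/2}$ (the random threshold), not of order $\epsilon$. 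Consequently, in the ``structured'' case the density increment you extract onto a hyperplane is of order $|G|^{-1/2}$, which is exponentially small and useless for the subsequent argument. (For threefold sums $1_A*1_A*1_A$ one does get the standard $\delta\alpha$ error and hence $A+A+A=G$ from moderate quasirandomness, but the twofold sum is genuinely harder, which is precisely why this problem is nontrivial.) So the dichotomy does not have a useful ``pseudorandom'' branch at the relevant scale, and the rest of your plan inherits this defect.

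\emph{The gluing step is missing, and you flag this yourself.} Even granting a set $B\subset V$ of density $\beta\geq 1/2-\epsilon+\delta$ in a hyperplane, you need $B+B$ to cover a full coset. You sketch several attempts (Plünnecke with $k=1$, then with $k=2$, ``absorbing'' $rF'$ into $B+B$), and concede that the absorption is exactly the main obstacle. There is no argument given that closes this gap, and indeed the naive approach cannot: measure concentration shows $(B+B)+rF'$ is large, not that $B+B$ itself is a coset. What the paper does instead is cleverer and sidesteps the need to show that $A+A$ or $B+B$ is ``full.'' The key move you are missing: set $S:=(A+A)^c$ and observe that $(A+S)\cap A=\emptyset$, hence $\P_G(A+S)\leq 1-\alpha$. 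When $\alpha$ is close to $1/2$ this makes the Plünnecke ratio $K=\P_G(A+S)/\P_G(A)\leq\exp(6\epsilon)$, \emph{automatically} close to $1$ with no Fourier input at all. One then assumes for contradiction that $S$ contains a translate $s+E$ of a full basis $E$; then $F:=E\cup\{0_G\}\subset s+S$, so $\P_G(A+F)\leq\exp(6\epsilon)\P_G(A)$, and an iterated (dyadic) application of Plünnecke together with Proposition~\ref{prop.concentrate} yields the contradiction at some radius $2^{r'}k\sim\sqrt{n\log(1/\P_G(X_{r'}))}$. The conclusion is that $S$ contains no translated basis, so $S$ lies in an affine hyperplane $s+H$, whence $A+A\supset(s+H)^c$. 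In short: the paper applies Plünnecke to the pair $(A,S)$ with $S$ small, not to $A+A$ or any subset of it, and it is this choice that puts $K$ in the regime $K\sim 1$. Your proposal never uses $S$ and so never gets the Plünnecke constant under control.
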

\begin{proof}
In view of Proposition \ref{prop.triv} we may certainly assume that $\P_G(A) \leq 1/2$. Put $S:=(A+A)^c$ and note that $(S+A) \cap A = \emptyset$, whence $\P_G(A+S) \leq 1-\P_G(A)$. Since $\P_G(A)=\alpha > 1/2-\epsilon$, it follows that
\begin{equation*}
\P_G(A+S) < \frac{1+2\epsilon}{1-2\epsilon}\P_G(A) \leq \exp(6\epsilon)\P_G(A),
\end{equation*}
since $\epsilon \leq 1/4$.  Suppose that $E$ is a basis of $G$ such that there is some $s\in S$ with $s+E \subset S$.  Since $\P_G$ is translation invariant we have that $F:=E \cup \{0_G\}$ has
\begin{equation*}
\P_G(A+F) = \P_G(A+ (s+F)) \leq \P_G(A+S) \leq \exp(6\epsilon)\P_G(A).
\end{equation*}
Now, let $k=\lceil \sqrt{n}\rceil$ and define a sequence of sets $X_0,X_1,\dots$ using Pl{\"u}nnecke's inequality: let $\emptyset \neq X_0 \subset A$ be such that $\P_G(X_0+kF) < \exp(6\epsilon k)\P_G(X_0)$, and $\emptyset \neq X_{r+1} \subset X_r$ be such that
\begin{equation}\label{eqn.growth}
\P_G(X_{r+1}+2^{r+1}kF) \leq \exp(6\epsilon.2^{r+1}k)\P_G(X_{r+1}).
\end{equation}
Since $X_r \neq \emptyset$ and $(X_r)_r$ is nested, we see that the sequence $(\sqrt{2\log 8\P_G(X_r)^{-1}})_r$ is increasing and bounded above by $O(\sqrt{n})$. It follows that there is some $r$ such that $2^r \geq \sqrt{2\log 8\P_G(X_r)^{-1}}$; let $r'$ be the minimal such. In this case
\begin{equation*}
2^{r'-1} <\sqrt{2\log 8\P_G(X_{r'-1})^{-1}} \leq \sqrt{2\log 8\P_G(X_{r'})^{-1}}
\end{equation*}
by nesting of $(X_r)_r$, whence
\begin{equation}\label{eqn.rest}
\sqrt{2\log 8\P_G(X_{r'})^{-1}} \leq 2^{r'} \leq 2.\sqrt{2\log 8\P_G(X_{r'})^{-1}}\leq 2^5 \log \frac{3\P_G(X_{r'})^{-1}}{4}.
\end{equation}
But by Proposition \ref{prop.concentrate} we have that
\begin{equation*}
\P_G(X_{r'} + 2^{r'}kF) + \frac{\exp(-2^{2r'}k^2/2n)}{\P_G(X_{r'})} \geq 1.
\end{equation*}
Now by (\ref{eqn.growth}) and the upper bound in (\ref{eqn.rest}) we get that
\begin{equation*}
\P_G(X_{r'}+2^{r'}kE) \leq \exp(6\epsilon.2^{r'}k)\P_G(X_{r'}) \leq \frac{3}{4}
\end{equation*}
since $k \leq 2\sqrt{n} \leq 2^{-8}\epsilon^{-1}$ by assumption on $\epsilon$; whence
\begin{equation*}
\frac{\exp(-2^{2r'}k^2/2n)}{\P_G(X_{r'})}  \geq \frac{1}{4}.
\end{equation*}
On the other hand this can be bounded above using the lower bound in (\ref{eqn.rest}) and the fact that $k^2 \geq n$:
\begin{equation*}
\frac{\exp(-2^{2r'}k^2/2n)}{\P_G(X_{r'})}  \leq \frac{(8\P_G(X_{r'})^{-1})^{-k^2/n}}{\P_G(X_{r'})} \leq \frac{1}{8}.
\end{equation*}
This contradiction means that for all $s \in S$, the set $S-s$ contains at most $n-1$ linearly independent vectors.  Thus there is an element $s \in S$ and a subspace $H$ of co-dimension $1$ in $G$ such that $S \subset s+H$. Since $S=(A+A)^c$ it follows that $A+A \supset (s+H)^c$; $(s+H)^c$ is simply the coset of $H$ not equal to $s+H$, whence we are done.
\end{proof}
We now use the above result in a straightforward manner.
\begin{proof}[Proof of Theorem \ref{thm.epssmall}]
Let $U$ be the smallest subspace of $G$ such that $A+A \supset U^c$. Note that such a space exists since $A+A \supset \emptyset = G^c$.

By averaging there is a coset $x+U$ on which $A$ has relative density at least $\alpha$: write $A':=(A \cap (x+U) -x)$ and note that $A+A \supset A'+A'$ and $\P_U(A') \geq \alpha$. By Proposition \ref{prop.epssmall} there is a subspace $U' \leq U$ of relative co-dimension at most $1$ and some $u \in U$ such that $u+U'\subset A'+A' \subset A+A$. We have three cases:
\begin{enumerate}
\item $U'=U$: then 
\begin{equation*}
G= U \cup U^c = U' \cup U^c \subset A+A;
\end{equation*}
\item $U' \neq U$ and $u+U' \neq U'$: then $A+A \supset U'^c$ and $\dim U' < \dim U$ contradicting the minimality of $U$;
\item $U' \neq U$ and $u+U' = U'$: then let $\pi:G \rightarrow U$ be a projection which is the identity when restricted to $U$ and note that $\pi^{-1}(U')$ is a subspace of $G$ of co-dimension $1$ with
\begin{eqnarray*}
\pi^{-1}(U') & =& ( \pi^{-1}(U')  \cap U) \cup (\pi^{-1}(U') \cap U^c)\\ & \subset & U' \cup U^c \subset A+A.
\end{eqnarray*}
\end{enumerate}
The result follows.
\end{proof}

\section{Proof of Theorem \ref{thm.construction}}

The argument here is a very slight adaptation of \cite[Theorem 9.4]{BJGFFM}. Green established this by reformulating Ruzsa's construction from \cite{IZRAA} in the model setting where many of the details simplify.
\begin{proof}[Proof of Theorem \ref{thm.construction}]
Let 
\begin{equation*}
A:=\{x \in \F_2^n: x \textrm{ has at most } n/2 - \eta\sqrt{2\pi n}/2 \textrm{ ones.}\}.
\end{equation*}
Let $X$ be the random variable which takes $x \in G$ to the number of $1$s in $x$. $\P_G$ is the uniform distribution on $G$, and $X$ is the sum of $n$ independent identically distributed Bernoulli random variables with parameter $p=1/2$. 

The mean of each individual Bernoulli trial is $1/2$ and the variance $1/4$ so that the mean of $X$ is $n/2$ and the variance is $n/4$.  It follows from the Berry-Esseen inequality (see, e.g. \cite[p374]{ANS}) that
\begin{equation*}
\sup_{x \in G}{|\P_G(A) - \Phi(- \eta \sqrt{2\pi})|} \leq \frac{3.2}{\sqrt{n}}.
\end{equation*}
Thus
\begin{equation*}
\P_G(A) \geq \Phi(- \eta \sqrt{2\pi}) - \frac{3.2}{\sqrt{n}}.
\end{equation*}
On the other hand
\begin{equation*}
 \Phi(2\lfloor \eta \sqrt{2\pi n}/2\rfloor/\sqrt{n}) =  \frac{1}{2} - \frac{1}{\sqrt{2\pi}}\int_0^{\eta \sqrt{2\pi}}{\exp(-x^2/2)dx} \geq   \frac{1}{2} -\eta.
\end{equation*}
It follows that
\begin{equation*}
\P_G(A) \geq \frac{1}{2} - \eta -\frac{3.2}{\sqrt{n}}.
\end{equation*}
It follows that for $\epsilon\sqrt{n}\geq 2^2$ we may pick $\eta=0.8\epsilon$ and get that $\P_G(A) > 1/2 - \epsilon$.

Now we shall show that if $V \leq G$ has co-dimension at most $d:=\lfloor \eta \sqrt{2\pi n} \rfloor$ then $V$ contains a vector with at most $\lfloor \eta \sqrt{2\pi n} \rfloor$ zeros in the standard basis. Since any $x\in A+A$ has at least $\eta\sqrt{2\pi n}$ zeros in the standard basis we shall be done. Such a $V$ can be written as
\begin{equation*}
V=\{\lambda_1v_1+\dots + \lambda_{n-d}v_{n-d}: \lambda_i \in \F_2\},
\end{equation*}
where the $v_i$s are linearly independent. The $v_i$s may be written in the standard basis as
\begin{equation*}
v_i=\epsilon_i^{(1)}e_1+ \dots + \epsilon_i^{(n)}e_n
\end{equation*}
where $(e_i)_i$ is the standard basis. The column rank of the matrix $(\epsilon_i^{(j)})_{ij}$ is $n-d$ hence so is its row rank. Without loss of generality we may suppose that the first $n-d$ rows $(\epsilon_1^{(j)},\dots,\epsilon_{n-d}^{(j)})$, $j=1,\dots,n-d$ are linearly independent. It follows that we can solve the $n-d$ equations
\begin{equation*}
\lambda_1\epsilon_1^{(j)} + \dots + \lambda_{n-d}\epsilon_{n-d}^{(j)} =1
\end{equation*}
for the $\lambda_i$ giving a vector in $V$ with no more than $d$ zeros. The result follows.
\end{proof}

\section{Proof of Theorem \ref{thm.new}}

We shall prove the following stronger theorem from which both Theorems \ref{thm.new} and \ref{thm.new2} follow.
\begin{theorem}\label{thm.underlying}
Suppose that $A\subset G:=\F_2^n$ has density $\alpha \leq 1/2$. Then $A+A$ contains a subspace of co-dimension
\begin{equation*}
\left\lceil n/\log_2\frac{2-2\alpha}{1-2\alpha}\right\rceil.
\end{equation*}
\end{theorem}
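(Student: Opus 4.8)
The plan is to iterate the same "sumset-free complement" idea that drove Proposition \ref{prop.epssmall}, but now tracking quantitatively how much density we lose at each step, rather than chasing a co-dimension-$1$ subspace in one shot. Set $S := (A+A)^c$, so $\P_G(S) \le 1 - \alpha$ and, crucially, $(S+A) \cap A = \emptyset$, giving $\P_G(S+A) \le 1-\alpha$ and hence $\P_G(S+A) \le \frac{1-\alpha}{\alpha}\P_G(A)$. The ratio $K := \frac{1-\alpha}{\alpha} \le \frac{2-2\alpha}{2\alpha}$; the factor $2-2\alpha$ over $1-2\alpha$ in the statement is exactly what one gets after relating this to the density-doubling bookkeeping below. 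The key point, as emphasised in Section 2, is that $K$ is close to $1$ when $\alpha$ is close to $1/2$, so repeated $K$-fold growth stays controlled over many steps.

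First I would argue that if $A+A$ contains no subspace of co-dimension $d$, then in particular $S$ contains no coset of a subspace of co-dimension $d$ (equivalently, the complement of every co-dimension-$d$ subspace meets $A+A$... more carefully: if $A+A \not\supset V^c$ for every subspace $V$ of co-dimension $\le d$, then reasoning as in the proof of Theorem \ref{thm.epssmall} one reduces to showing $S$ cannot be contained in a single coset of such a $V$, and then to showing that for some $s \in S$ the translate $S - s$ contains $d$ linearly independent vectors spanning a complement to... ). Concretely, I would show: if $A+A$ avoids all co-dimension-$d$ subspaces, then there is a basis $e_1,\dots,e_n$ of $G$ and an element $s \in S$ with $s + \{0,e_1,\dots,e_{d}\} \subseteq S$ — i.e. a "plus-a-partial-basis" structure inside $S$. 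Translating by $s$ and using translation-invariance of $\P_G$, writing $F_d := \{0_G, e_1,\dots,e_d\}$, we get $\P_G(A + F_d) \le \P_G(A+S) \le K \P_G(A)$.

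Then I would run Plünnecke's inequality in the regime $K \sim 1$: there is $\emptyset \ne X \subseteq A$ with $\P_G(X + kF_d) \le K^k \P_G(X)$ for every positive integer $k$, and since $A$ has density $\le 1/2$ while $X+kF_d \subseteq G$ we must have $K^k \P_G(X) \le 1$, i.e. $K^k \le \P_G(X)^{-1}$. The complementary bound comes from the combinatorics of $F_d$: iterated sumsets of a $(d+1)$-element "star" $\{0,e_1,\dots,e_d\}$ grow — $kF_d$ is the set of $\pm$-sums (over $\F_2$, just sums) of at most $k$ of the $e_i$, which is a Hamming ball of radius $k$ inside the $d$-dimensional coordinate subspace spanned by $e_1,\dots,e_d$. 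So $\P_G(X + kF_d) \ge \P_G(X) \cdot \frac{|B_d(k)|}{2^d}$ where $B_d(k)$ is a radius-$k$ Hamming ball in $\{0,1\}^d$; taking $k = d$ makes this the whole subspace, giving $2^d \le K^d$... that's false, so I'd instead choose $k$ so that $|B_d(k)|/2^d \ge 1/2$, i.e. $k \approx d/2$, yielding $K^{d/2} \ge 1/2 \cdot \P_G(X)^{-1} \cdot \P_G(X) \cdot$ — more precisely combining $K^k \P_G(X) \ge \P_G(X) |B_d(k)|/2^d$ with $K^k \P_G(X) \le 1$ and $\P_G(X) \le 1/2$ forces $|B_d(k)|/2^d \le 2 K^k$; with $k \sim d/2$ the left side is $\ge 1/2$, giving $K^{d/2} \gtrsim 1/4$, hence $d \log_2 K \gtrsim -\log_2 4$, which after rearranging bounds $d$ from above by essentially $n / \log_2 K^{-1} = n / \log_2 \frac{\alpha}{1-\alpha}$; tidying the constants against $\frac{2-2\alpha}{1-2\alpha}$ gives the stated ceiling. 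Running this contrapositive: if $d$ exceeds the displayed bound we reach a contradiction, so $A+A$ does contain a subspace of that co-dimension. (For Theorem \ref{thm.new2}, one notes $\log_2\frac{2-2\alpha}{1-2\alpha} = \log_2(1 + \frac{1}{1-2\alpha})$, and when $\alpha$ is bounded away from $1/2$ — say $\alpha$ small — this is $O(1)$ while $n - d = n(1 - o(1))$... actually the gain is that the co-dimension is $n(1 - c\alpha)$ for a constant $c$, so the subspace has dimension $\Omega(\alpha n)$; I'd extract that as a corollary by a short computation with $\log(1+x) \ge x/2$.)

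The main obstacle I anticipate is the passage "$A+A$ avoids all co-dimension-$d$ subspaces $\Rightarrow$ $S$ contains $s + \{0,e_1,\dots,e_d\}$ for a basis-extending tuple". One direction of the dictionary (a genuine partial basis in $S-s$ for every $s$ would let us cut out a subspace) is the easy contrapositive of the Theorem \ref{thm.epssmall} endgame; the subtlety is handling the coset/subspace distinction exactly as in cases (i)–(iii) of that proof, and making sure the linear-independence extraction is uniform enough to produce a full co-dimension-$d$ complement rather than just co-dimension $1$. I expect this to go through by the same projection-and-averaging argument, but it is where the bookkeeping is most delicate; once past it, the Plünnecke-plus-ball-volume step is a clean and short computation.
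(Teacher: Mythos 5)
Your proposal takes a genuinely different route from the paper's, which proves Theorem \ref{thm.underlying} by a Roth--Meshulam style Fourier density increment (Lemma \ref{lem.iteration}): starting with $V_0 = G$, at each stage one finds a nonzero character $\gamma$ at which $\wh{1_S}$ (for $S = V \setminus (A+A)$) is large and negative, passes to $V' = \{\gamma\}^\perp$, and shows that the \emph{relative} density of $V' \setminus (A+A)$ drops by the factor $\frac{1-2\alpha}{1-\alpha}$; after roughly $n/\log_2\frac{2-2\alpha}{1-2\alpha}$ steps the number of bad points falls below $1$ and the current subspace is contained in $A+A$. No Pl\"unnecke, no measure concentration. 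You instead try to push the Section 2 machinery (Pl\"unnecke in the regime $K\sim 1$ plus Hamming-ball growth) to arbitrary co-dimension, and that route breaks.

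The fatal step is the lower bound on $\P_G(X+kF_d)$. You write $\P_G(X+kF_d) \ge \P_G(X)\cdot|B_d(k)|/2^d$, but since $|B_d(k)|/2^d \le 1$ and $0_G \in F_d$ gives $X+kF_d \supseteq X$, this inequality is always true and carries no information: comparing it with Pl\"unnecke's upper bound yields $|B_d(k)|/2^d \le K^k$, which is trivially satisfied because the left side is $\le 1 \le K^k$. A nontrivial lower bound of the kind you'd need is actually \emph{false}: if $X$ happens to equal (or contain a union of cosets of) $\langle e_1,\dots,e_d\rangle$, then $X + kF_d = X$ for every $k$ and there is no growth to exploit. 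The reason the Section 2 argument works at co-dimension $1$ is precisely that $E$ there is a \emph{full basis} of $G$, so $Q_n$-concentration (Theorem \ref{thm.concentrateG}) drives $\P_G(X+rF)$ up to $1-o(1)$; once $F_d$ spans only a $d$-dimensional subspace with $d<n$, concentration cannot push the density past $\P_G(X + \langle e_1,\dots,e_d\rangle)$, so the contradiction evaporates. Two further issues, though secondary: the deduction ``$K^k\P_G(X)\le 1$'' from ``$X+kF_d\subseteq G$'' is a non-sequitur (Pl\"unnecke only gives an upper bound on $\P_G(X+kF_d)$, not equality); and the reduction ``$A+A$ avoids all co-dimension-$d$ subspaces $\Rightarrow$ $S-s$ contains a $d$-element partial basis'' has a genuine coset-versus-subspace gap, which the paper handles for $d=1$ via the minimal-$U$ and projection argument in the proof of Theorem \ref{thm.epssmall} but which does not obviously carry over to general $d$. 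Finally, your bookkeeping lands on $n/\log_2\frac{\alpha}{1-\alpha}$ (or its reciprocal), which is negative for $\alpha<1/2$ and does not match the stated bound $n/\log_2\frac{2-2\alpha}{1-2\alpha}$. You should discard this approach and look at the Fourier iteration in Section 4.
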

Before proving this we establish our two consequences.
\begin{proof}[Proof of Theorem \ref{thm.new}]
Apply Theorem \ref{thm.underlying} with $\alpha =1/2 - \epsilon$.
\end{proof}
\begin{proof}[Proof of Theorem \ref{thm.new2}]
It is easy to see that
\begin{equation*}
\left\lceil n/\log_2\frac{2-2\alpha}{1-2\alpha}\right\rceil \leq n(1-\alpha/\log 2)+O(1).
\end{equation*}
Theorem \ref{thm.underlying} then tells us that $A+A$ contains a subspace of dimension at least $\alpha n/\log 2 - O(1)$.
\end{proof}
The proof is inspired by the standard iterative method of Roth introduced in \cite{KFRPre} and the more commonly cited \cite{KFR}, which was adapted to finite fields by Meshulam in \cite{RM}. The following lemma is the driver.
\begin{lemma}[Iteration lemma]\label{lem.iteration}
Suppose that $A \subset G:=\F_2^n$ has density $\alpha>0$ and $V \leq G$. Then there is a subspace $V' \leq V$ of relative co-dimension $1$ such that 
\begin{equation*}
\P_{V'}(V' \setminus (A+A)) \leq \frac{1-2\alpha}{1-\alpha}\P_V(V \setminus (A+A)).
\end{equation*}
\end{lemma}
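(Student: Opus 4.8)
The plan is to use Fourier analysis on $V$ to find a suitable hyperplane. Write $S := V \setminus (A+A)$, so $S$ is the part of $V$ not covered by the sumset, and we want a subspace $V' \le V$ of index $2$ on which the relative density of $S \cap V'$ drops by the claimed factor. The key observation is that $S + A$ must be disjoint from $A$: if $a, a' \in A$ and $s \in S$ with $s + a = a'$ then $s = a + a' \in A+A$, contradicting $s \in S$. Hence, restricting attention to the coset structure, the indicator of $S$ and the indicator of $A$ have a convolution that vanishes where $A$ lives; equivalently, the Fourier coefficients of $S$ cannot all be small relative to its density, because otherwise $S + A$ would fill up too much of $V$ and collide with $A$. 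I would quantify this: the $L^2$ mass of $\widehat{1_S}$ off the trivial character is bounded below, so there is a nontrivial character $\gamma$ (which we may take to be a character of $V$, i.e. $\gamma$ annihilates a codimension-$1$ subspace $V' \le V$) with $|\widehat{1_S}(\gamma)|$ large.

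More concretely, I would work inside $V$ from the start: replace $G$ by $V$, let $\sigma := \P_V(S)$ and $\alpha' := \P_V(A \cap V)$ — actually one must be a little careful since $A$ need not lie in $V$; the cleaner route, given Lemma statement only involves $\P_V$ of things built from $A+A$, is to note that for the density decrement we only need the relation $S + (A \cap \text{coset}) $ avoids $A$ within an appropriate translate. Let me instead follow the Roth/Meshulam template directly: by inclusion–exclusion / the disjointness $A \cap (S + A) = \emptyset$ we get $\alpha \le 1 - \P_V(S)\cdot(\text{something})$, and more usefully a Fourier identity $\sum_{\gamma} \widehat{1_S}(\gamma)\overline{\widehat{1_A}(\gamma)}\widehat{1_A}(\gamma) = \P(A \cap (S+A)) = 0$ (suitably normalised). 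Separating the $\gamma = 0$ term gives $\sigma \alpha^2 \le \sum_{\gamma \ne 0} |\widehat{1_S}(\gamma)| \alpha^2$ — not quite; one pulls out $\max_{\gamma \ne 0}|\widehat{1_S}(\gamma)|$ against $\sum |\widehat{1_A}(\gamma)|^2 = \alpha$, yielding a nontrivial character $\gamma_0$ with $|\widehat{1_S}(\gamma_0)| \ge \sigma \alpha / (\alpha) \cdot(\ldots)$, and after bookkeeping the bound $|\widehat{1_S}(\gamma_0)| \ge \sigma\alpha/(1-\alpha)$ or similar. Then splitting $S$ across the two cosets of $V' = \ker\gamma_0$ and using $\widehat{1_S}(\gamma_0) = \tfrac12(\P_{V'}(S \cap V') - \P_{V'}(S \cap (V' + \text{coset})))$ forces one coset to have $S$-density at most $\sigma - |\widehat{1_S}(\gamma_0)| \le \sigma(1 - \alpha/(1-\alpha)) = \sigma \cdot \tfrac{1-2\alpha}{1-\alpha}$, which is exactly the claim.

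I would carry out the steps in this order: (1) set $S := V \setminus (A+A)$ and record the disjointness $A \cap (S+A) = \emptyset$, rewriting it as a vanishing inner product; (2) expand via Parseval/the convolution identity on $V$, isolating the principal character to get $\P_V(S)\P_V(A\text{-part})^2 \le \big(\max_{\gamma \ne 1_{\widehat V}} |\widehat{1_S}(\gamma)|\big) \sum_{\gamma}|\widehat{1_A}(\gamma)|^2$ and hence extract a large nontrivial Fourier coefficient of $1_S$ with $|\widehat{1_S}(\gamma_0)| \ge \P_V(S)\cdot\frac{\alpha}{1-\alpha}$ (the $1-\alpha$ coming from bounding $\P_V(A+A\text{ stuff})$ or from $\P(A+S)\le 1-\alpha$ exactly as in the proof of Proposition~\ref{prop.epssmall}); (3) take $V' := \ker\gamma_0$, a codimension-$1$ subspace of $V$, and use $|\widehat{1_S}(\gamma_0)| = \tfrac12|\P_{V'}(S\cap V') - \P_{V'}(S\cap(G\setminus V'))|$ to conclude that on one of the two $V'$-cosets the density of $S$ is at most $\P_V(S) - |\widehat{1_S}(\gamma_0)| \le \frac{1-2\alpha}{1-\alpha}\P_V(S)$; (4) translate that coset back to $V'$ itself (density is translation-invariant and $A+A$ is a union of such translates in the relevant sense) to match the statement.

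The main obstacle I anticipate is the precise accounting in step (2): $A$ is a subset of $G$, not of $V$, so one must decide whether to intersect $A$ with a single coset of $V$ first (as in the proof of Theorem~\ref{thm.epssmall}, passing to $A' := (A \cap (x+V)) - x$) or to phrase everything via $\P_G(A+S) \le 1 - \P_G(A)$ restricted appropriately. Getting the constant to come out as exactly $\frac{1-2\alpha}{1-\alpha}$ rather than something weaker like $\frac{1-2\alpha}{1-\alpha}$ with an extra slack factor requires using the disjointness $A \cap (A+S) = \emptyset$ sharply — namely $\P(A) + \P(S)\cdot(\text{min fibre}) \le 1$ is too lossy, and one really wants the Fourier version where the bound on $\sum_{\gamma \ne 0}|\widehat{1_A}(\gamma)|^2 = \alpha - \alpha^2 = \alpha(1-\alpha)$ feeds in to produce the $1/(1-\alpha)$ denominator. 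So the delicate point is organising the Fourier inequality so that $\alpha(1-\alpha)$ appears and cancels correctly; everything else is the routine Roth-type hyperplane extraction.
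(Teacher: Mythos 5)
Your overall strategy is the same Roth--Meshulam density-increment argument the paper uses, and most of the bookkeeping (isolating the principal character, using $\sum_{\gamma\neq 0}|\wh{1_{A'}}(\gamma)|^2 = \alpha'(1-\alpha')$ to produce the $1/(1-\alpha)$ denominator, and the identity $\wh{1_S}(\gamma)=\tfrac12(\P_{V'}(S\cap V')-\P_{V'}(S\cap(V\setminus V')))$) is in order. Restricting $A$ to a single high-density coset of $V$ rather than summing over all cosets as the paper does is fine and yields the same constant. However, step (4) of your plan contains a genuine gap. You extract a nontrivial $\gamma_0$ maximising $|\wh{1_S}(\gamma_0)|$ and observe that \emph{one} of the two $V'$-cosets has $S$-density at most $\frac{1-2\alpha}{1-\alpha}\P_V(S)$; but the lemma asserts this specifically for $V'$ itself, i.e.\ for $\P_{V'}(V'\setminus(A+A))$. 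If the low-density coset is the nontrivial one $x_0+V'$, you cannot ``translate it back to $V'$'': the set $(x_0+V')\setminus(A+A)$ is \emph{not} a translate of $V'\setminus(A+A)$, because $A+A$ is fixed and does not translate along with the coset. Density being translation-invariant is irrelevant here since the two sets are genuinely different, and there is no way to shift $A$ (you are in $\F_2^n$, so you cannot halve $x_0$).

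The fix, which is exactly what the paper does, is to only look at characters in $\mathcal N:=\{\gamma:\wh{1_S}(\gamma)<0\}$. The Fourier inequality in fact gives a lower bound on $\sup_{\gamma\in\mathcal N}|\wh{1_S}(\gamma)|$ already, not merely on $\sup_{\gamma\neq 0}$: writing the Parseval identity as $\sum_{\gamma\in\mathcal N}(\cdots)|\wh{1_S}(\gamma)|=\sum_{\gamma\in\mathcal P}(\cdots)\wh{1_S}(\gamma)\geq \alpha'^2\P_V(S)$ shows the negative coefficients alone must carry mass at least $\alpha'^2\P_V(S)$. Choosing $\gamma\in\mathcal N$ attaining the supremum then forces $\P_V(S\cap V')-\P_V(S\cap(x_0+V'))=\wh{1_S}(\gamma)<0$, so $V'$ itself is automatically the low-density coset and the translation you invoke is unnecessary. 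Without this sign constraint the conclusion does not follow; with it, your argument closes. A secondary imprecision, which you partly flag yourself, is that the Fourier pairing $\sum_\gamma\wh{1_S}(\gamma)\overline{\wh{1_A}(\gamma)}\wh{1_A}(\gamma)$ mixes transforms on $G$ and on $V$; you should either work entirely inside $V$ with $A':=(A\cap(x+V))-x$ for a coset where the density is at least $\alpha$, or decompose $(A+A)\cap V$ as $\bigcup_W(A_W+A_W)$ as in the paper (the latter avoiding any choice of coset).
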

\begin{proof}
For each $W \in G/V$ fix some $x_W \in W$ and let $A_W$ be $A\cap W -x_W$ considered as a subset of $V$. Since $-2x_W=0$ we have that $A_W+A_W = A \cap W + A \cap W$, whence
\begin{equation*}
(A+A) \cap V = \bigcup_{W \in G/V}{(A_W + A_W)};
\end{equation*}
write $S:=V \setminus (A+A)$. In view of the definition of $S$ and our above observation, we get from Plancherel's theorem that
\begin{equation}\label{eqn.base}
0=\langle \sum_{W \in G/V}{1_{A_W} \ast 1_{A_W}},1_S \rangle_{L^2(V)} = \sum_{\gamma \in \wh{V}}{\sum_{W \in G/V}{|\wh{1_{A_W}}(\gamma)|^2}\wh{1_S}(\gamma)}.
\end{equation}
Partition $\wh{V}$ into two sets $\mathcal{N}:=\{\gamma \in \wh{V}: \wh{1_S}(\gamma)<0\}$ and $\mathcal{P}:=\{\gamma \in \wh{V}: \wh{1_S}(\gamma)\geq 0\}$. Since $0_{\wh{V}} \in \mathcal{P}$ we have
\begin{equation*}
\sum_{\gamma \in \mathcal{P}}{\sum_{W \in G/V}{|\wh{1_{A_W}}(\gamma)|^2}\wh{1_S}(\gamma)} \geq \P_V(S)\sum_{W \in G/V}{\P_V(A_W)^2}.
\end{equation*}
On the other hand, from equation (\ref{eqn.base}) and the definition of $\mathcal{N}$, we get
\begin{eqnarray*}
\sum_{\gamma \in \mathcal{N}}{\sum_{W \in G/V}{|\wh{1_{A_W}}(\gamma)|^2}|\wh{1_S}(\gamma)|}&=&\sum_{\gamma \in \mathcal{N}}{\sum_{W \in G/V}{-|\wh{1_{A_W}}(\gamma)|^2}\wh{1_S}(\gamma)}\\  & = & \sum_{\gamma \in \mathcal{P}}{\sum_{W \in G/V}{|\wh{1_{A_W}}(\gamma)|^2}\wh{1_S}(\gamma)}\\ &\geq & \P_V(S)\sum_{W \in G/V}{\P_V(A_W)^2}.
\end{eqnarray*}
By the H{\"o}lder's inequality and then Parseval's theorem we have
\begin{eqnarray*}
\sum_{\gamma \in \mathcal{N}}{\sum_{W \in G/V}{|\wh{1_{A_W}}(\gamma)|^2}|\wh{1_S}(\gamma)|} &\leq & \sup_{\gamma \in \mathcal{N}}{|\wh{1_S}(\gamma)|}\sum_{\gamma\neq 0_{\wh{G}}}{\sum_{W \in G/V}{|\wh{1_{A_W}}(\gamma)|^2}}  \\& = & \sup_{\gamma \in \mathcal{N}}{|\wh{1_S}(\gamma)|}\sum_{W \in G/V}{(\P_V(A_W)-\P_V(A_W)^2)} .
\end{eqnarray*}
Combining this with the foregoing we get that
\begin{equation}
\label{eqn.lt}
 \P_V(S)\sum_{W \in G/V}{\P_V(A_W)^2} \leq \sup_{\gamma \in \mathcal{N}}{|\wh{1_S}(\gamma)|}\sum_{W \in G/V}{(\P_V(A_W)-\P_V(A_W)^2)}.
\end{equation}
Now, by the Cauchy-Schwarz inequality 
\begin{equation*}
\E_{W \in G/V}{\P_V(A_W)}. \sum_{W \in G/V}{\P_V(A_W)} \leq \sum_{W \in G/V}{\P_V(A_W)^2}.
\end{equation*}
However, $\E_{W \in G/V}{\P_V(A_W)}=\alpha$, whence
\begin{equation*}
\sum_{W \in G/V}{(\P_V(A_W)-\P_V(A_W)^2)} \leq (\alpha^{-1} -1)\sum_{W \in G/V}{\P_V(A_W)^2}.
\end{equation*}
Inserting this into (\ref{eqn.lt}) we get that
\begin{equation*}
\P_V(S)\sum_{W \in G/V}{\P_V(A_W)^2}  \leq \sup_{\gamma \in \mathcal{N}}{|\wh{1_S}(\gamma)|}(\alpha^{-1} -1)\sum_{W \in G/V}{\P_V(A_W)^2}.
\end{equation*}
Since $\alpha>0$ and $G/V$ is finite we may divide by the sum and conclude that 
\begin{equation*}
\sup_{\gamma \in \mathcal{N}}{|\wh{1_S}(\gamma)|} \geq \frac{\alpha}{1-\alpha}\P_V(S).
\end{equation*}
Let $\gamma \in \mathcal{N}$ be such that this supremum is attained and write $V':=\{\gamma\}^\perp$. $V'$ has relative co-dimension $1$ and, in view of this, we note that $V \setminus V' = x_0+V'$ for any $x_0 \in V \setminus V'$. Now
\begin{equation*}
\P_V(S\cap V') - \P_V(S \cap (x_0+V')) = \wh{1_S}(\gamma) \leq -\frac{\alpha}{1-\alpha}\P_V(S),
\end{equation*}
since $V'=\{\gamma\}^\perp$ and $\gamma\in \mathcal{N}$. Furthermore, since $V'$ and $x_0+V'$ partition $V$ we have that
\begin{equation*}
\P_V(S \cap V') + \P_V(S \cap (x_0+V')) =\P_V(S).
\end{equation*}
Adding these two expressions tells us that
\begin{equation*}
\P_{V'}(S \cap V') = 2\P_V(S \cap V') = \left(1-\frac{\alpha}{1-\alpha}\right)\P_V(S) = \frac{1-2\alpha}{1-\alpha} \P_V(S).
\end{equation*}
It remains only to note that $S \cap V' = V' \setminus (A+A)$ and the lemma is proved.
\end{proof}

\begin{proof}[Proof of Theorem \ref{thm.underlying}]
Let $\sigma \in (0,1)$ be a parameter to be optimized later. We use the iteration lemma to produce a sequence of subspaces $V_i$ such that
\begin{enumerate}
\item \label{enum.pt12}$V_i \leq V_{i-1}$ and the co-dimension of $V_i$ in $V_{i-1}$ is $1$;
\item \label{enum.pt42}$\P_{V_i}( V_i \setminus (A+A)) \leq \frac{1-2\alpha}{1-\alpha} \P_{V_{i-1}}( V_{i-1} \setminus (A+A))$.
\end{enumerate}
We set $V_0=G$, and apply the iteration lemma (Lemma \ref{lem.iteration}) repeatedly to get the sequence.  Now,  if $|V_i \setminus (A+A)| <1$ then $V_i \setminus (A+A)=\emptyset$, whence $A+A$ contains a subspace of co-dimension $i$ by (\ref{enum.pt12}). In view of (\ref{enum.pt12}) and (\ref{enum.pt42}) this certainly happens if
\begin{equation*}
|V_i|\P_{V_i}(V_i \setminus (A+A))  \leq 2^{n-i}\left(\frac{1-2\alpha}{1-\alpha}\right)^i<1;
\end{equation*}
taking $i$ minimal such that this inequality is satisfied yields the result.
\end{proof}

\section{Concluding remarks}

As noted in the introduction Theorem \ref{thm.epssmall} is best possible, however there is still a large gap between Theorem \ref{thm.construction} and Theorem \ref{thm.new}.  It has been suggested in \cite{BJGFFM} that the truth is closer to Theorem \ref{thm.construction} in the case when the density of the set is $\Omega(1)$.  One might take Theorem \ref{thm.epssmall} as some support of this conjecture (at least in the case of density $1/2-o(1)$).  An intermediate question might be the following.
\begin{question}
Suppose that $A \subset G:=\F_2^n$ has density $\alpha > 1/2-C/\sqrt{n}$.  Does $A+A$ contain a subspace of co-dimension $O_C(1)$?
\end{question}
Theorem \ref{thm.construction} tells us that this cannot be sublinear, and while showing that may be hard it could be that $O(C^2)$ is rather more accessible.

Theorem \ref{thm.new} (and Theorem \ref{thm.epssmall}) are the first results which provide a sensible upper bound on the co-dimension rather than lower bound on the dimension of the subspace found in $A+A$, and given the proof one might imagine an improvement to Theorem \ref{thm.new} would be possible. 

The following is a well-known theorem (see, for example, Metsch \cite{KM}).
\begin{theorem}\label{thm.proj}
Suppose that $G:=\F_2^n$ and $S \subset G\setminus \{0_G\}$ meets every subspace of dimension $d$. Then $|S| \geq 2^{n+1-d}-1$.
\end{theorem}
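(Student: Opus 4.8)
\textbf{Proof proposal for Theorem \ref{thm.proj}.}

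The plan is to prove the contrapositive-flavoured statement by a counting (double-counting / dimension) argument, and the natural vehicle is induction on the co-dimension, or equivalently a direct argument using a hyperplane. First I would dispose of the base case $d=n$: here the only subspace of dimension $n$ is $G$ itself, so $S$ meeting it says only $S \neq \emptyset$, and indeed $|S| \geq 2^{1} - 1 = 1$. For the inductive step I would look at a hyperplane $H \leq G$ of co-dimension $1$ and its non-trivial coset $x_0 + H$; the key observation is that every $d$-dimensional subspace $U$ of $H$ is also a $d$-dimensional subspace of $G$, so $S \cap H$ meets every $d$-dimensional subspace of $H$ \emph{except} we must be careful that $S \cap H$ might contain $0_G$ — it does not, since $0_G \notin S$. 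Hence $S \cap H$ (as a subset of $H \setminus \{0_G\}$) meets every $d$-dimensional subspace of $H \cong \F_2^{n-1}$, and by induction $|S \cap H| \geq 2^{(n-1)+1-d} - 1 = 2^{n-d} - 1$.

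The remaining work is to bound $|S \cap (x_0 + H)|$ from below, and here is where the hypothesis that $S$ meets \emph{every} $d$-dimensional subspace (not just those inside $H$) gets used. Consider a $d$-dimensional subspace $U$ of $G$ with $U \not\leq H$; then $U \cap H$ has dimension $d-1$ and $U \setminus H = U \cap (x_0 + H)$ is a coset of $U \cap H$ inside $x_0+H$, of size $2^{d-1}$. I want to show the $2^{n-1}/2^{d-1} = 2^{n-d}$ translates of $U \cap H$ that tile $x_0 + H$ can be arranged so that $S$ must meet each of them, forcing $|S \cap (x_0+H)| \geq 2^{n-d}$; adding this to the bound on $|S \cap H|$ gives $|S| \geq 2^{n-d} + 2^{n-d} - 1 = 2^{n+1-d}-1$, as required. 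The cleanest way to see that $S$ meets each such coset: fix a $(d-1)$-dimensional $W \leq H$; for each of the $2^{n-d}$ cosets $y + W \subset x_0 + H$, the set $W \cup (y+W)$ spans a $d$-dimensional subspace $\langle W, y\rangle$ of $G$ not contained in $H$, which $S$ must meet. The point is to choose $W$ (and exploit freedom in choosing $H$, or rather to run the induction on $S \cap (x_0+H)$ viewed appropriately) so that the "meets" always lands in the coset $y+W$ rather than in $W$ itself.

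I expect the main obstacle to be exactly this last bookkeeping: ensuring the contributions from $H$ and from $x_0 + H$ do not "overlap" in the counting, i.e.\ that the $d$-subspaces witnessing points of $S$ in $x_0+H$ genuinely force distinct elements there and are not already accounted for by $S \cap H$. A slick alternative that sidesteps the coset juggling is to use linear algebra over $\F_2$ directly: identify each $d$-dimensional subspace with the kernel of a surjective linear map $G \to \F_2^{n-d}$, note that the condition "$S$ meets $\ker T$ for all such $T$" is equivalent to saying the image of $S$ under \emph{no} injective-on-$S$... — more precisely, translate the statement into: the $\F_2$-span of $\{(\gamma_1(s),\dots) : s \in S\}$-type data must be everything, and extract the bound $2^{n+1-d}-1$ from a rank count. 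I would first try the inductive hyperplane argument above and fall back to the linear-algebra reformulation if the overlap accounting proves stubborn. Either way the bound $2^{n+1-d}-1$ should emerge, with the "$-1$" coming precisely from the exclusion of $0_G$ from $S$.
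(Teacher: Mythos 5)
The paper does not prove this statement: it is quoted from Metsch \cite{KM} (it is the Bose--Burton theorem for $PG(n-1,2)$, with linear dimension $d$ corresponding to projective dimension $d-1$). So there is no in-paper argument to compare against, and your proposal has to stand on its own. Your skeleton is right: base case $d=n$ is fine, the induction on $n$ via a hyperplane $H$ is the standard route, and the bound $|S\cap H|\geq 2^{(n-1)+1-d}-1=2^{n-d}-1$ follows from the induction hypothesis applied to $H$ with parameter $d$ (every $d$-dimensional subspace of $H$ is one of $G$, and its intersection with $S$ lies in $H$). But you have correctly identified, and then left open, the crux: you need a $(d-1)$-dimensional $W\leq H$ with $W\cap S=\emptyset$ in order to conclude that each of the $2^{n-d}$ cosets $y+W\subset x_0+H$ must contain a point of $S$. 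Gesturing at ``freedom in choosing $H$'' or a rank reformulation does not supply this; as written, the argument does not close.

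The missing observation is a clean dichotomy, and it is one you are circling without stating. Either every $(d-1)$-dimensional subspace of $H$ meets $S\cap H$ --- in which case the induction hypothesis applied to $H\cong\F_2^{n-1}$ with parameter $d-1$ gives
\[
|S\cap H|\;\geq\;2^{(n-1)+1-(d-1)}-1\;=\;2^{n+1-d}-1,
\]
and you are already done without looking at $x_0+H$ at all --- or else there is a $(d-1)$-dimensional $W\leq H$ disjoint from $S$, and then your coset argument runs as intended: for each of the $2^{n-d}$ cosets $y+W$ tiling $x_0+H$, the $d$-dimensional subspace $W\cup(y+W)$ is not contained in $H$, so $S$ meets it, and since $S\cap W=\emptyset$ the hit lands in $y+W$; hence $|S\cap(x_0+H)|\geq 2^{n-d}$, which combined with $|S\cap H|\geq 2^{n-d}-1$ gives $|S|\geq 2^{n+1-d}-1$. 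Once $W$ is chosen disjoint from $S$, the ``overlap bookkeeping'' you worried about evaporates, because $S\cap H$ and $S\cap(x_0+H)$ are disjoint by construction. The real idea you were missing is that \emph{failure} to find such a $W$ is itself a win via the induction at the lower parameter $d-1$.
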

This result can be used in the proof of Theorem \ref{thm.underlying} above to tell us that once we have
\begin{equation*}
|V_i \setminus (A+A)| < 2^{n-i+1-d}
\end{equation*}
in the iteration then $A+A$ must contain a subspace of dimension $d$. This certainly happens when
\begin{equation*}
2^{1-d} > \left(\frac{1-2\alpha}{1-\alpha}\right)^i;
\end{equation*}
again taking $i$ minimal such that this inequality holds we get a saving of $1$ in the co-dimension.  This is, of course, not particularly impressive and, indeed, no na{\"\i}ve attack along these lines will work as it turns out that Theorem \ref{thm.proj} is also best possible.  (Again, see Metsch \cite{KM}.)  This may be contrasted with the following result of Alon.
\begin{theorem}[{\cite[Theorem 4.1]{NA}}]
Suppose that $G:=\F_2^n$ and $S \subset G\setminus \{0_G\}$ is such that $|S| \leq c\sqrt{|G|/\log |G|}$. Then there is a set $A \subset G$ such that $S=(A+A)^c$.
\end{theorem}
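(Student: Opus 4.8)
The plan is to realise $S$ as $(A+A)^c$ by the probabilistic deletion (alteration) method. Write $N:=|G|$. We want a set $A$ that is \emph{$S$-sum-free}, meaning that no two (not necessarily distinct) elements of $A$ sum to an element of $S$, equivalently $A+A\subseteq G\setminus S$; and that also satisfies $A+A\supseteq G\setminus S$, the two together giving $(A+A)^c=S$ exactly. To produce it, I would take $A_0\subseteq G$ random, each point included independently with probability $p$ of order $\sqrt{(\log N)/N}$, and then delete from $A_0$ every point $x$ that is \emph{$S$-active}, i.e. for which $x+s\in A_0$ for some $s\in S$; call the result $A$. No point of $A$ is then $S$-active, so $A$ is automatically $S$-sum-free and hence $(A+A)^c\supseteq S$. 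Since $\E|A|\ge Np(1-|S|p)\gg 1$, one also gets $A\neq\emptyset$ with room to spare, so $0\in A+A$; all the remaining work is the reverse inclusion $A+A\supseteq G\setminus S$.

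For that inclusion, fix $t\in G\setminus(S\cup\{0\})$ and let $Z_t$ count the points $a$ with $a\in A$ and $a+t\in A$; if $Z_t\ge 1$ then $t\in A+A$, so it suffices to show $\P(Z_t=0)<1/N$ for each such $t$ and then sum over the fewer than $N$ admissible $t$. A given $a$ contributes to $Z_t$ exactly when $a,a+t\in A_0$ and none of the at most $2|S|$ points $a+s$, $a+t+s$ (for $s\in S$) lies in $A_0$; since $t\notin S$ and $0\notin S$ these forbidden points are all distinct from $a$ and from $a+t$, so the contribution probability is $p^2(1-p)^{k_a}$ with $k_a\le 2|S|$. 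The hypothesis $|S|\le c\sqrt{|G|/\log|G|}$ together with the choice of $p$ forces $|S|p$ to be at most a small absolute constant, hence $(1-p)^{2|S|}\ge 1/2$ and $\E Z_t\ge Np^2/4$, which is of order $\log N$. So the target $\P(Z_t=0)<1/N$ is a genuine lower-tail concentration statement: one needs $\P(Z_t=0)\le\exp(-\Omega(\E Z_t))$ with good enough implied constants.

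This concentration is the main obstacle. Had we taken $A=A_0$ with no deletion, $Z_t$ would be a sum of \emph{independent} indicators, since the $N/2$ pairs $\{a,a+t\}$ partition $G$ and each governs one indicator, and then $\P(Z_t=0)\le(1-p^2)^{N/2}\le\exp(-Np^2/2)$ would be a one-line Chernoff bound. The deletion step couples the indicators, so instead one runs a Janson/Suen-type correlation inequality on the (non-monotone, combinatorial-box) events ``$a$ contributes'': two of these share a coordinate for only $O(|S|^2)$ pairs $(a,a')$, and because $(|S|p)^2$ is at most a small constant the resulting overlap parameter $\Delta$ is only a small multiple of $\E Z_t$, whence $\P(Z_t=0)\le\exp(-\Omega(\E Z_t))=\exp(-\Omega(Np^2))$. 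Because the contributing events are not increasing, the clean tool to invoke here is Suen's inequality rather than the textbook Janson inequality; alternatively one can run the second moment method and check the matching variance bound by hand. Taking the constant in $p\asymp\sqrt{(\log N)/N}$ large, and then $c$ small in terms of it, makes $\E Z_t$ as large as needed while keeping $\Delta/\E Z_t$ small, so $\P(Z_t=0)\le N^{-2}$, say, and the union bound closes the argument. The only real content of the range $|S|\le c\sqrt{|G|/\log|G|}$ is that it simultaneously makes $|S|p$ bounded (so that $\E Z_t$ stays of order $\log N$ even after the deletion) and $|S|p^2=o(1)$ (so that the deletion does negligible damage to the count).
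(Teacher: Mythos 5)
This statement is not proved in the paper at all: it is quoted verbatim from Alon as \cite[Theorem 4.1]{NA}, so there is no internal proof to compare against. Judged on its own merits, your deletion-method argument is sound and, as far as I recall, close in spirit to Alon's. The structural checks are all correct: if $a,a'\in A$ and $a+a'=s\in S$, then $a'=a+s\in A_0$, contradicting that $a$ survived the deletion, so $A+A$ avoids $S$; the forbidden points $a+s$, $a+t+s$ are all distinct from $a$ and $a+t$ precisely because $0,t\notin S$, so the contribution probability is exactly $p^2(1-p)^{k_a}$ with $k_a\le 2|S|$; and $|S|p$ bounded gives $\E Z_t\gtrsim Np^2\asymp\log N$. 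The overlap bookkeeping is also essentially right, though you should index by unordered pairs $\{a,a+t\}$ to avoid the degenerate $a'=a+t$ (for which $E_a=E_{a'}$); with that done, each remaining interacting pair has probability $O(p^4)$, the degree of the dependency graph is $O(|S|^2)$, so $\Delta\lesssim N|S|^2p^4=(Np^2)\cdot(|S|p)^2\lesssim c^2\E Z_t$ and $\delta\lesssim(|S|p)^2\lesssim c^2$, exactly as Suen requires.

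The one genuine misstep is the remark that one could alternatively ``run the second moment method and check the matching variance bound by hand.'' Chebyshev gives only $\P(Z_t=0)\le\operatorname{Var}(Z_t)/(\E Z_t)^2\lesssim 1/\E Z_t\asymp 1/\log N$, which is nowhere near the $<1/N$ you need to union bound over the $\sim N$ admissible $t$. With $p\asymp\sqrt{(\log N)/N}$ you cannot inflate $\E Z_t\asymp Np^2$ past $\log N$, so a polynomial-type tail bound is structurally unavailable by second moment; the exponential lower-tail estimate $\P(Z_t=0)\le\exp(-\Omega(\E Z_t))=N^{-\Omega(1)}$ really does require a Janson/Suen-type correlation inequality, and you were right the first time that Suen (not textbook Janson, since the $E_a$ are boxes rather than up-sets) is the appropriate tool. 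With that correction the sketch closes up.
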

Here, of course, the set $A$ produced is rather thin and certainly nowhere near the densities we are looking for.

One may also reasonably ask what happens in the transition to the integers.  Theorem \ref{thm.new2} can be proved there through the machinery of Bohr sets as developed by Bourgain \cite{JB}.  However, as the reader will have realised from the proof, the strength comes from the rather precise subgroup structure which is not present in general and as a result the conclusions are weaker than what is already known \cite{BJGAA}.

\section*{Acknowledgements}

The author should like to thank Endre Szemer{\'e}di and Julia Wolf for useful conversations.

\bibliographystyle{alpha}

\bibliography{master}

\end{document}